\documentclass[12pt,reqno]{amsart}




\newcommand{\SL}{\scal \ell}

\headheight=6.15pt \textheight=8.75in \textwidth=6.5in
\oddsidemargin=0in \evensidemargin=0in \topmargin=0in
\usepackage[active]{srcltx}

\usepackage{latexsym}
\usepackage{graphicx}

\renewcommand{\Re}{{\operatorname{Re}\,}}

\renewcommand{\epsilon}{\varepsilon}

\newcommand{\wt}{\widetilde}

\newcommand{\N}{{\mathbb N}}
\newcommand{\R}{{\mathbb R}}
\newcommand{\C}{{\mathbb C}}
\newcommand{\Q}{{\mathbb Q}}
\newcommand{\Z}{{\mathbb Z}}

\newcommand{\half}{{\textstyle \frac 12}}

\renewcommand{\phi}{\varphi}

\newcommand{\acal}{\mathcal{A}}

\newcommand{\ccal}{\mathcal{C}}

\newcommand{\jcal}{\mathcal{J}}

\newcommand{\lcal}{\mathcal{L}}

\newcommand{\ocal}{\mathcal{O}}
\newcommand{\pcal}{\mathcal{P}}

\newcommand{\rcal}{\mathcal{R}}
\newcommand{\scal}{\mathcal{S}}

\newcommand{\al}{\alpha}

\newtheorem{theo}{{\sc Theorem}}[section]

\newtheorem{mainprob}{{\sc Problem}}
\newtheorem{prob}{{\sc Problem}}

\newtheorem{cor}[theo]{{\sc Corollary}}

\newtheorem{prop}[theo]{{\sc Proposition}}

\title[The inverse spectral problem  ]
{Survey on the  inverse spectral problem  }

\author{Steve Zelditch}
\address{Department of Mathematics, Northwestern  University, Evanston, IL 60208, USA}

\email{zelditch@math.northwestern.edu}

\thanks{Research partially supported by NSF grant DMS-1206527  }

\date{\today}

\begin{document}
\maketitle

Let $(M, g)$ be a  Riemannian manifold  of dimension $n$, possibly with boundary,
and denote its Laplacian by  $$\Delta_g = 
\frac{1}{\sqrt{g}}\sum_{i,j=1}^n \frac{\partial}{\partial
x_i}g^{ij} \sqrt{g} \frac{\partial}{\partial x_j},$$  where    $g_{ij} = g(\frac{\partial}{\partial
x_i},\frac{\partial}{\partial x_j}) $, $[g^{ij}]$ is the inverse
matrix to $[g_{ij}]$ and $g = {\rm det} [g_{ij}].$ When $M$ is compact 
the spectrum of $\Delta_g$ is the discrete set of eigenvalues 
 \begin{equation} \label{EV} \left\{\begin{array}{l}  - \Delta_g \phi_j =
 \lambda_j^2 \phi_j,\;\; \langle \phi_i, \phi_j \rangle = \delta_{i
j}  \\ \\
B\phi_j = 0\;\; \mbox{on}\; \partial M, \end{array} \right.,
\end{equation}
repeated according to their multiplicities. 
When $M$ has a non-empty boundary $\partial M $, one imposes boundary conditions such as 
Dirichlet $B u = u|_{\partial M}$ or Neumann $B u =
\partial_{\nu} u |_{\partial M}$.

More generally, we may consider the eigenvalue problem for  semi-classical   Schr\"odinger operators
\begin{equation} \label{SO} \hat{H}_{\hbar} \psi_{\hbar, j} := \left(- \frac{\hbar^2}{2} \Delta_g + V\right) \psi_{\hbar, j}
= E_j(\hbar) \psi_{\hbar, j}, \end{equation}
where  $V \in C^{\infty}(M)$ is a potential. Here, $\hbar$ is the Planck constant.
When $(M, g)$ is non-compact, the spectrum of $\Delta_g$ is often
continuous, and one might instead pose the inverse problem for  eigenvalues of the scattering operator 
$S_{\hbar} $ (`phase shifts') or poles of its
analytic continuation (`resonances') \S \ref{PHASE}.   If the potential $V$ grows
at infinity, $\hat{H}_{\hbar}$ has a discrete spectrum.

The inverse spectral or scattering  problem is to determine as much as possible of the metric, potential or domain
from spectral or scattering data. In the case of discrete spectrum one would like to ``invert" the 
map $Sp: (M, g, V) \to \mbox{Spec} (\hat{H}_{\hbar}) = \{\lambda_j^2\}_{j = 1}^{\infty}$, or at least to show
that it is 1-1 when restricted to a reasonable class of $(M, g, V)$.    We note that there are two types of inverse problems
for Schr\"odinger operators \eqref{SO}: (i) the more difficult one where $\hbar$ is fixed, and (ii) the substantially simpler one where
one attempts to determine the potential from the 1-parameter family of spectra $\{E_j(\hbar)\}_{j = 1}^{\infty}$.   In some sense, one
would like to use the $\lambda_j$ as ``coordinates'' on the `space' of isometry classes of metrics, potentials or domains.
 This is only a heuristic way  to envision
the problem because of its infinite dimensionality.  Except in dimension one, the range of $Sp$ is not known and is certainly a very small subset of the space $\Sigma \subset \R_+^{\N}$
of sequences compatible with the Weyl law for counting eigenvalues, which in the purely metric
case takes the form $\#\{j: \lambda_j \leq \lambda] \sim C_n Vol(M,g) \lambda^n$.   However it leads one to ask if $Sp$ is `generically' 1-1, to study
its first and second derivative, 
to describe its level sets (isospectral sets),  or  to see if there is any natural additional data one may use to supplement $Sp$ to make it 
a well-behaved map.  A simpler problem is spectral rigidity: can one deform a metric, potential
or domain in a non-trivial way while keeping the eigenvalues fixed?  I.e. do there exist (smooth) curves in an isospectral
set of domains, metrics or potentials with a fixed spectrum?  Isometric domains or metrics are regarded as the
same, or as trivially isospectral,  and when we discuss isospectral domains or metrics it is understood that
they are non-trivially isospectral.

\subsection{Historical background} 

 The inverse spectral problem was first stated by Sir Arthur Schuster in 1882  (see  Appendix F of \cite{St} (p. 395)) :``...Find out the shape of the bell by means of the sound which it is capable of sending out. And this is the problem which ultimately spectroscopy hopes to solve in the case of light. In the meantime we must welcome with delight even the smallest step in the right direction." An early result is that of Rayleigh, determining the coefficients of a differential operator from its eigenvalues \cite{R}.  The problem is best known in the formulation of L. Bers,
S. Bochner,  and M. Kac (``Can you hear the shape of
a drum").  Here, the metric is assumed to be Euclidean, $V = 0$ and one  wants to determine a domain $\Omega \subset \R^2$ from its Dirichlet or Neumann eigenvalues.

In quantum physics, the inverse problem is mainly to determine  the potential (or metric)  of \eqref{SO} from it spectral
and scattering data \cite{N,N2,ChS}.   The idea is that this data is observable in experiments while the potential
is not.  There are many
types of scattering data but in   this survey we only discuss the {\it phase shifts} or eigenvalues  of the scattering
matrix $S_h$  (a unitary operator acting on the sphere; \S \ref{PHASE}). 
The problem of recovering the potential of a Schr\"odinger operator from
eigenvalues and phase shifts was studied since the 1940's  when V. Bargmann discovered two potentials
with the same phase shifts  \cite{Bar1,Bar2}. The inverse problem in dimension one was solved with 
some restrictions by Gel'fand-Levitan and Marchenko  (see \cite{GL,Mar,Mar2} and also \cite{Dy,Dy2} for background and history).  Some other early articles are \cite{A,Bo,Hy,L}.  

The inverse spectral problem in one dimension is simpler than and quite different from the problem in  higher dimensions and
in this survey we only consider
the problem when $\dim M \geq 2$. To give an idea of how the difficulty grows with the dimension, it is simple
to prove that the standard metric $g_0$  on the sphere $S^n$ is determined by the eigenvalues of $\Delta_{g_0}$
when $n = 2$. It is difficult but possible to show  $g_0$ is determined by
$Spec (\Delta_{g_0})$  for $n \leq 6$ (Tanno). It is unknown whether $g_0$ is determined by
$Spec (\Delta_{g_0})$ when $n \geq 7$, although they are known to be spectrally rigid and locally
determined (Tanno).  On the other hand, it is known that balls in $\R^n$ are determined by
their eigenvalues (by the isoperimetric inequality).

There exist two broad classes of results  in inverse spectral theory: (a) construction of isospectral pairs or deformations,
i.e. of `counter-examples' to the inverse spectral problem; (b) positive results proving rigidity or spectral determination
on classes of domains, metrics or potentials. In this survey we concentrate on positive results of type (b) and ask,
how much of the geometry is determined by the spectrum? We refer to \cite{GP} for some results of type (a).

There are a number of recent surveys on the inverse spectral problem  \cite{DH,Z1}. The present survey is
shorter and less systematic. Its purpose is to give a reader who is not very familiar with inverse spectral theory
 an idea of the current state of the art as well as some idea of what is involved in proving
inverse spectral results. Some are new and some are relatively old but illustrate different features
of the problem.  In particular, we do not attempt to provide a comprehensive list of references
(see \cite{DH,Z1}).  One of the charms of the subject is that the problems have an obvious motivation
in both mathematics and physics,  are 
easy to state,  yet remain largely open after 50 years.  
After reviewing the basic techniques we state some tantalizingly simple sounding problems in \S \ref{PROBS}.

\subsection{\label{PRE} Analyticity,  symmetry, accidental degeneracies and multiple spectra}

The inverse spectral problem in higher dimensions involves a vast collection of objects and it is not surprising
that there are few general results applying to all of them. Inevitably there exists sporadic spectral behavior that
allows isospectral sets to have surprising properties. For instance, the eigenvalues or lengths of closed geodesics
might have unusual multiplicities. In the theory of automorphic forms, the Langlands (-Eichler) correspondence
between arithmetic hyperbolic surfaces with cusps and compact arithmetic surfaces defined by quaternion algebras  is a (partial)
isospectrality phenomenon that has no known global geometric interpretation. The correspondence is proved by
`matching orbital integrals', e.g. by comparing lengths of closed geodesics on both surfaces. Thus, the partial isospectrality
has  arithmetic rather than geometric origins.

It  is common to add assumptions or restrictions in order to prove positive results. 
Before introducing definitions and results, we call the reader's attention to some common ones.   \bigskip

\begin{itemize}

\item  (A) The  potentials, metrics or domains are often assumed to have a symmetry, either discrete or continuous. On $\R^n$, potentials or domains are assumed to be invariant under reflections
$x_j \to - x_j$, or under a   di-hedral symmetry  \cite{Z2}.  Continuous symmetry often takes the form of radial potentials, surfaces of revolution etc. The inverse spectral problem is restricted to the  class of such symmetric domains or potentials. \bigskip

\item  (B) The metrics, potentiasl or domains are often assumed to be real analytic. Convexity is another
natural condition. In the counter-example
of \cite{GWW} of non-isospectral domains with the same Dirichlet (resp. Neumann) spectra, the domains have corners
and are non-convex.  \bigskip

\item  (C)The isospectrality is often assumed to occur for more  than one spectrum, as in the case of \eqref{SO} for the one-parameter family $\hbar \to E_j(\hbar)$
of spectra. Or in the case of domains,  one might prescribe both the Dirichlet and Neumann spectra, or both  the spectrum `inside' the domain and also `outside' of it. \bigskip

\item (D) In counter-examples, the metric, potential or domain often has some non-generic behavior such as 
high-dimensional manifolds of closed geodesics or other mulitplicities in the length spectrum. Most positive
results require multiplicity one in the length spectrum.

\end{itemize}  \bigskip

Let us explain the background to these assumptions.

In the early days of one-dimensional inverse spectral theory, Borg \cite{Bo}, Levinson \cite{L} and Marchenko
\cite{Mar,Mar2}   proved that
a potential $V \in L^1[0, L]$ is determined by two spectra of a  Schr\"odinger operator (somewhat similar to 
prescribing spectra of \eqref{SO} for two values of $\hbar$).  When the potential is symmetric, $q(L - x) = q(x)$
then one good spectrum, either  the Dirichlet or Neumann  spectrum, suffices (see e.g. \cite{BrH}). The solution is not unique for general non-symmetric potentials. Results assuming   $\Z_2$ symmetries  are multi-dimensional generalizations of this result. 
See \cite{Z2,GU,He,HZ2} for recent results of this kind. 

Radial symmetry is a very common assumption in inverse scattering theory since it essentially reduces the inverse
problem to one dimension  \cite{KKS,N3}. The author is not aware of any inverse results for phase shifts that do
not assume radial symmetry.  The analogous problem for metrics is to assume the surface has an $S^1$ rotational
symmetry, i.e.  is a surface
of revolution. When it is also mirror symmetric, the inverse problem was solved in \cite{BrH} by reducing the inverse
problem to that of Borg-Levinson-Marchenko.  There are several other results of this kind due to M. Kac, D. Stroock
and P. B\'erard (see \cite{DH}). In \cite{Z3} the author solved the inverse spectral problem for convex analytic
surfaces of revolution without  mirror symmetry; the proof used what we now call a normal forms approach (Theorem \ref{ISPSR}
and \S \ref{SURFREV}).

A plane domain  may be locally defined as the graph of a function $y = f(x)$ over $(-1,1)$ and therefore has the
same number of functional degrees of freedom as a surface of revolution or a radial potential. I.e. the unknown is a function of one
real variable. If the domain has the $\Z_2 \times \Z_2$ symmetries of an ellipse, the problem has the same symmetries
as the one dimensional Schr\"odinger problem of Borg-Levinson. The lack of uniqueness in the 1 D case might suggest lack of uniqueness
for the plane domain problem without two symmetries. However, if the domains are real analytic, the one $\Z_2$ symmetry
is enough to solve the inverse problem for the  Dirichlet or Neumann spectrum (Theorem \ref{ONESYM1D}  \cite{Z2}). It is not known at present if the inverse spectral problem is uniquely solvable (even in the analytic class)  without assuming at least one symmetry.

The history of results for radial potentials or surfaces of revolution  might also suggest that one can only  solve the inverse problem if the unknown is a function of just one real variable, e.g.
that one cannot `hear the shape of a multi-dimensional drum'. However this is also not true. In \cite{HZ} H. Hezari
and the author proved that one can determine a real analytic $\Z_2^n$-symmetric domain in $\R^n$ for any $n$ from its Dirichlet or Neumann
spectrum (Theorem \ref{ONESYM}); here the unknown is a function $f(x_1, \dots, x_n)$
of $n$ variables. An earlier result for the $\hbar$ family of spectra of \eqref{SO} for  $\Z_2^n$-symmetric  potentials was proved in \cite{GU,He}; in \cite{He} fewer  symmetry assumptions were needed. 

Real analyticity is often assumed because the spectral data often leads to local  expressions in terms of Taylor coefficients
of the metric, potential or domain around closed geodesics. Unless one is making a deformation, there is no obvious
way to relate spectral information for distinct geodesics. Real analyticity is used to make a local result global, i.e. to
use Taylor coefficients at one point to determine the unknown function.

The $\hbar$-problem is significantly simpler than the problem for a single spectrum. It is similar to assuming
the knowledge of the spectrum of several commuting operators, e.g. the joint spectrum of $\Delta_g$ and
of $L = \frac{\partial}{\partial \theta}$ on a surface of revolution; here $L$ generates rotations around the axis. 
In \cite{Z2}, the principal step is to show that the single spectrum of $\Delta_g$ determines the joint spectrum. 
This is clearly necessary if the inverse spectral  problem for one value of $\hbar$ is solvable.

Regarding (D), we recall that isospectrality of two Laplacians $\Delta_j$  is the condition that there exists a unitary 
operator $U$ such that $U \Delta_1 U^* = \Delta_2$. In the correspondence principle between classical
and quantum mechanics, the  classical condition is
that there should exist a symplectic diffeomorphism $\chi$ between the cotangent bundles which
restricts to a diffeomorphism  $\chi: S^*_{g_1} M_1 \to S^*_{g_2} M_2$  between the unit cosphere bundles
and which conjugates the geodesic flows $G_j^t$ in the sense that $\chi G_1^t \chi^{-1} = G_2^t$.  The
correspondence principle only works if $U$ is a special type of operator known as a Fourier integral operator. We
refer to \cite{Zw} for background. In \cite{Z5} it was pointed out that the symplectic maps underlying unitary
Fourier integral operators can be multi-valued correspondences, and that all of the well-known counter-examples
of Sunada \cite{Su} were conjugate by unitary Fourier integral operators (the unitary conjugation was also observed
by B\'erard, who termed them transplantations).   Thus, `most' of the counterexamples are Fourier-isospectral
but there are some examples which are not. 

 As discussed in \S \ref{BIRK}, one of the main approaches
to inverse results is through Birkhoff canonical forms, which are special `local'  unitary Fourier integral conjugations
between Laplacians in small neighborhoods of closed geodesics. This is a different kind of condition from 
local isometry of the metrics but is related to it in special cases. See \cite{Gor,Sch} for discussion of isospectral
pairs which are not locally isometric.

As this discussion indicates, a natural direction for future work in inverse spectral theory is to 
 remove the symmetry and analyticity assumptions as much as possible. It would be
useful  to know if (or to what extent)  symmetry or analyticity were a spectral invariant for some reasonable classes of metrics, potentials
or domains. The only result of this kind known to the author is the recent one  of \cite{DHV}, which shows that, in the case of
the 1-parameter $\hbar$ inverse spectral problem,  a radial monotonic potential $V_0$ of a Schr\"odinger operator \eqref{SO} on $\R^n$ is
spectrally determined among `all' smooth potentials (not assuming any symmetry of the latter). 

With this background in mind we now present the basic objects in inverse spectral theory.

\section{Spectral invariants}

The data from which one hopes to recover the metric or potential is the  list $\{\lambda_j^2\}_{j = 0}^{\infty}$ of eigenvalues,
enumerated with multiplicity, i.e. the dimension of the eigenspace \eqref{EV}. We first consider how to assemble
the data into useful invariants. 

\subsection{Traces and trace formulae}

Since the origins of inverse spectral theory, 
the principal spectral invariants are  defined by trace formulae. One assembles the eigenvalue
data into generating functions such as the heat trace $\Theta(t): = \sum_j e^{-t \lambda_j^2}$ or wave trace
\begin{equation} \label{WT} S(t) : = \sum_j e^{i t \lambda_j}. \end{equation} The simplest way to relate spectrum
and geometry is to study the  singularities of these spectral functions. 

The heat trace is smooth except at $t = 0$, and the coefficients of its expansion at $t = 0$ were the first
spectral invariants to be studied. In dimension 2, the heat trace has the form $\Theta(t) = t^{-1} \mbox{Area}(M,g)
+ \frac{2\pi }{3} \chi(M) + \frac{t}{60} \int_M \tau_g^2 d V_g + \cdots$. There is just one singular term and its
coefficient is the integral of a local geometric invariant. One also sees that all of the Taylor coefficients of $t \Theta(t)$
are given by integrals of local geometric invariants. This shows that there are useful spectral invariants besides
the ones which arise from singularities.

\subsection{Length spectrum and wave trace singularities}

The {\it length spectrum} of a boundaryless manifold $(M, g)$ is
the  set
 \begin{equation} Lsp(M,g) = \{ L_{\gamma_1} < L_{\gamma_2} < \cdots\} \end{equation}
 of lengths of closed geodesics $\gamma_j$, i.e.   the  set
of
 distinct
 lengths, not including multiplicities. One refers to the the
 length spectrum repeated according to multiplicity as the
 {\it extended length spectrum}.

For a generic metric on a manifold without boundary, the length spectrum is a discrete set
and moreover the lengths have multiplicity one.  In the boundary case, the length spectrum
 $Lsp(\Omega)$  is the set of lengths of closed billiard
 trajectories, which has
 accumulation points  at lengths of closed billiard  trajectories which glide for some interval of time
 along the boundary. In the case of convex plane domains, e.g.,
 the length spectrum is the union of the lengths of periodic
 reflecting rays and multiples of $|\partial \Omega|.$

\subsection{Singular support of the wave trace}

The wave group of $(M, g)$ is the unitary group 
\begin{equation} \label{Ut} U(t) = e^{i t \sqrt{-\Delta_g}} : L^2(M) \to L^2(M). \end{equation}  Its
trace \eqref{WT}  is a distribution on $\R$. 
The first result on the wave trace is the Poisson relation on a
manifold without boundary, stating that the singular support of $S(t)$ is contained in the length spectrum,
\begin{equation}\label{SS} \mbox{Sing Supp}\; S(t) \subset \;\;
Lsp(M,g).
\end{equation}
It was proved by Y. Colin de Verdi\`ere \cite{CdV3}, Chazarain
\cite{Ch2}, and Duistermaat-Guillemin \cite{DG} (following
non-rigorous work of Balian-Bloch \cite{BB2} and Gutzwiller).  The generalization to manifolds with boundary was
proved by Anderson-Melrose \cite{AM} and Guillemin-Melrose
\cite{GM}. As above, we denote the length of a closed geodesic
$\gamma$ by $L_{\gamma}.$ For each $L = L_{\gamma} \in Lsp(M,g)$
there are at least two closed geodesics of that length, namely
$\gamma$ and $\gamma^{-1}$ (its time reversal). The singularities
due to these lengths are identical so one often considers the even
part of $S(t)$ i.e. $Tr E(t)$ where \begin{equation} \label{Et} E(t)= \cos (t
\sqrt{-\Delta_g}).\end{equation}
The containment relation   (\ref{SS})  could be strict if 
a length $L \in Lsp(M, g)$ is multiple. In this case,  $S(t)$  might be
smooth at $L \in Lsp(M, g)$.
\medskip


One cannot determine multiplicities of lengths  from the singularities of $S(t)$ in any simple way. 
  The  Sunada-type
isospectral pairs \cite{Su}  always have multiple length spectra, and for
many (presumably, generic) examples, the length spectra have
different multiplicities.

 In the notation for $Lsp(M, g)$ we wrote $L_{\gamma_j}$ as if the
 closed geodesics of this length were isolated. But in many
 examples (e.g. spheres or flat tori), the geodesics come in
 families, and the associated length $T$ is the common length of
 closed geodesics in the family. In place of closed geodesics, one
 has
 components of the fixed point sets of $G^T$ at this time. 


\subsection{Singularity expansions}

The next fact is that $S(t)$ has a
singularity expansion at each $L \in Lsp(M,g)$:
\begin{equation} \label{WTE} \begin{array}{l} S(t)  \equiv  e_0(t) + \sum_{L \in Lsp(M,g)} e_L(t)\;\; mod \;\; C^{\infty},
\end{array} \end{equation} where $e_0, e_L$ are Lagrangean distributions
with singularities at just one point, i.e. $\mbox{sing supp}\; e_0 = \{0\},
\mbox{sing supp}\; e_L = \{L\}$. When the length functional on the
loopspace of $M$ is a Bott-Morse functional, the terms have
complete asymptotic expansions.  In the Morse case (i.e. bumpy
metrics), the expansions take the form \begin{equation} e_0(t) =
a_{0,-n}(t+i0)^{-n} + a_{0, -n+1}(t+i0)^{-n+1}+\cdots
\end{equation}
\begin{equation}  \label{WEXP} \begin{array}{lll}
e_L(t) &=& a_{L,-1} (t-L+i0)^{-1} + a_{L,0}\log (t-(L+i0))\\[10pt]
&+&a_{L,1} (t-L+i0)\log (t-(L+i0)) +\cdots\;\;,\end{array}
\end{equation} where $\cdots$ refers to homogeneous terms of ever
higher integral degrees ([DG]). The wave coefficients $a_{0,k}$ at
$t=0$ are essentially the same as the singular heat coefficients,
hence are given by integrals over $M$ of $\int_M P_j(R,\nabla
R,...)\mbox{dvol}$ of homogeneous curvature polynomials. The wave
invariants for $t \not= 0$ have the form:
\begin{equation} \label{WINV} a_{L, j} = \sum_{\gamma: L_{\gamma}
= L} a_{\gamma, j}, \end{equation} where $a_{\gamma, j}$ involves
on the germ of the metric along $\gamma$. Here,  $\{\gamma\}$ runs
over the set of closed geodesics, and where $L_\gamma$,
$L_\gamma^{\#}$, $m_\gamma$, resp.\ $P_\gamma$ are the length,
primitive length, Maslov index and linear Poincar\'e map of
$\gamma$. The Poincar\'e map is defined in \eqref{P}.  For instance,
 the
principal wave invariant at $t = L$ in the case of a
non-degenerate closed geodesic is given by
 \begin{equation} \label{PRIN} a_{L,-1} = \sum_{\gamma:L_{\gamma}=L}
\frac{e^{\frac {i\pi }{4} m_{\gamma}
}L_\gamma^{\#}}{|\det(I-P_{\gamma})|^{\half}}. \end{equation} The
same formula for the leading singularity  is valid for periodic
reflecting rays of compact smooth Riemannian domains with boundary
and with Neumann boundary conditions, while in the Dirichlet case
the numerator must be multiplied by $(-1)^r$ where $r$ is the
number of reflection points (see \cite{GM,PS}).

The wave invariants for $t \not= 0$ are both less global and more
global than  the heat invariants.  First, they are more global in
that they are not integrals of local invariants,  but involve the
semi-global first return map of the closed geodesic  ${\mathcal P}_{\gamma}$. One could
imagine different local geometries producing the same first return
map. Second, they are less global because they are determined by
the germ of the metric at $\gamma$ and are unchanged if the metric
is changed outside $\gamma$.

Thus, associated to any closed geodesic $\gamma$ of $(M, g)$ is
the sequence $\{a_{\gamma^r, j}\}$ of wave invariants of $\gamma$
and of its iterates $\gamma^r$. These invariants depend only on
the germ of the metric at $\gamma$. The principal question of this
survey may be stated as follows:

\begin{mainprob}\label{WI}  How much of the local geometry   of the metric $g$ at
$\gamma$ is contained in the wave invariants $\{a_{\gamma^r,
j}\}$? Can the germ of the metric $g$ at $\gamma$ be determined
from the wave invariants? At least, can the symplectic equivalence
class of its germ be determined?
\end{mainprob}

 As will be discussed below,
the classical Birkhoff normal
form of the metric (or the Poincar\'e map $P_{\gamma}$)
at $\gamma$ is determined by the wave trace invariants  \cite{G, G2, Z3, Z4} .

\subsection{Domains and metrics with the same wave invariants}

Although we are emphasizing wave invariants, it is known that they do not generally determine a domain or metric. 
There exists an example known as a  Penrose mushroom  (due to Michael Lifshits) which shows  that wave invariants are not
sufficient to discriminate between all pairs of smooth billiard
tables. Indeed, Lifshitz constructs (many) pairs of smooth domains
$(\Omega_1, \Omega_2)$  which have the same length spectra and the
same wave invariants at corresponding pairs of closed billiard
orbits $\gamma_j$ of $\Omega_j$ ($j = 1,2$). We refer to \cite{FK} for proof that the domains with the same wave invariants
in general have no eigenvalues in common. One can prove this by contradiction using a deformation argument;
see also \cite{GH} for the generalization to Schr\"odinger  operators/

The domains are constructed in part from ellipses, which have very special billiard dynamics, and are thus far
from generic. They are also not real analytic. Thus they exemplify the strange behavior that can result in wildly
non-generic cases mentioned in (D) of \S \ref{PRE}. It would be interesting to find spectral invariants that distinguish
them.

\subsection{Residual spectral invariants and the Poisson-Wave trace}

More generally one can consider zeta functions such as \begin{equation} \label{Stz} S(t, z) = \sum_{j = 1}^{\infty} \lambda_j^{-z} e^{i t \lambda_j}, \;\; (\Re z >> 0)\end{equation}
and study the poles and residues of its analytic continuation in $z \in \C$. The {\it residual spectral invariants} are the
residues at the poles. They  have local geometric expressions,
which relate spectrum and geometry.  In particular the {\it wave invariants} above are residues of  $S(t, z)$ at its poles in $z$
for
$t \not= 0$.
 Note that \eqref{WT} is the boundary value of the holomorphic function
\begin{equation} \label{PT} S(t + i \tau) : = \sum_j e^{i (t + i \tau)  \lambda_j}, \;\;\; \tau > 0 \end{equation}
in the upper half plane $\{t + i \tau: \tau > 0\}$. More generally one may define the two-variable holomorphic function
 $S(t + i \tau, z)  =  \sum_j  \lambda_j^{-z} e^{i (t + i \tau)  \lambda_j},$ and study its (branched) meromorphic continuation
from $\C_+ \times \C_+\; (\Re \tau > 0, \Re z >> 0)$ to $\C \times \C$. There may exist new spectral invariants
from this analytic continuation. 

\subsection{Non-residual or non-local spectral invariants}

There exist many results on such non-local spectral invariants as the lowest eigenvalue $\lambda_1^2$ or
the Laplace determinant  $\det \Delta_g$. 
One can also consider special values of $\Theta(t)$ or $S(t,z)$ at non-singular values. Non-local invariants are rarely  computable
in geometric terms. Examples which are  the coefficients of positive integral powers of $t$ in the heat
trace $Tr e^{- t \Delta_g}$ at $t = 0$.  In this survey we concentrate on residual spectral invariants.

\section{Explicit formulae for wave invariants }

To explain what is meant by `local geometry' we now give the formula for the `wave trace invariants' or
(equivalently) Birkhoff canonical form invariants around a closed geodesic $\gamma$ in the two-dimensional
case. 

\subsection{The principal wave trace invariant}

A lot of information about $(M, g)$ is already contained in the principal wave invariant \eqref{PRIN}. In \cite{DG}
(and its Appendix) it is proved that when $Lsp(M, g)$ is simple, one may recover the lengths of all closed
geodesics and the eigenvalues of all of the Poincar\'e maps $P_{\gamma}$. This suggests already that the geodesic
flows of isospectral Riemannian manifolds should not be far from symplectically equivalent in the sense
discussed in \S \ref{PRE},  at least in neighborhoods in $S^*_g M$ of closed geodesics.

\subsection{Riemannian manifolds without boundary}

The wave
invariants at a closed geodesic  $\gamma$ are  invariants of the
germ of the metric  at $\gamma$.  We introduce   Fermi normal coordinates $(s,y)$ along
$\gamma$ and denote the corresponding  metric coefficients by
$g_{ij}$. That is, the coordinates $(s, y)$ correspond to $\exp_{\gamma(s)} y \nu_{s}$ where
$\nu_s$ is the unit normal at the point $\gamma(s)$.


We then consider orthogonal Jacobi fields along $\gamma$, i.e. normal
fields $Y(s) = y(s) nu_s$ where $Y$ solves the  Jacobi equation $Y'' + R(Y, \gamma') \gamma'= 0$
along $\gamma$. The space of complex Jacobi fields along $\gamma$
is denoted ${\jcal}_{\gamma}$. The linear Poincare map is the
monodromy map 
\begin{equation} \label{P} P_{\gamma}: Y(t) \to Y(t + L_{\gamma}) \end{equation} on this space. We refer
to its eigenvectors $Y_j, \overline{Y}_j$ as Jacobi eigenvectors. 

We  assume that the
geodesic is  non-degenerate in the sense that   $(\det (I - P_{\gamma})
\not= 0)$.  To simplify the exposition, we only consider the case
where $\gamma$ is elliptic, i.e.  where $P_{\gamma}$ has unit modulus eigenvalues $e^{i \alpha_j}$. 
We define the Floquet invariants
$$\beta_j = (1- e^{i\alpha_j})^{-1}.$$
There is a similar story for hyperbolic closed geodesics.




\begin{theo}\label{BNFDATA}  \cite{Z4} Let ${\gamma}$ be a strongly
non-degenerate closed geodesic.  Then the $k$th wave invariant at $\gamma$ has the form  $$a_{\gamma k} =
\int_{\gamma} I_{\gamma; k} (s; g)ds$$ where:

\noindent(i)  $I_{\gamma; k}(s; g)$  is a homogeneous 
 polynomial  of weight $-k-1$ (under the scaling $g \to \epsilon^2 g$)  in the data
$\{ y, \dot{y}, D^{m}_{s,y}g \}$ with $m=(m_1,\dots,
m_{n+1})$ satisfying $|m| \leq 2k+4$ ;

\noindent(ii) The degree of $I_{\gamma; k }$ in the Jacobi field
$Y = y \nu$ is at most $6k+6$;

\noindent(iii) At most $2k+1$ indefinite integrations over
$\gamma$ occur in $I_{\gamma; k }$;

\noindent(iv) The degree of $I_{\gamma; k }$ in the Floquet
invariants $\beta_j$ is at most $k+2$.\end{theo}

The formula is simplest in dimension $2$, where there is only one
Floquet invariant $\beta$. We use the notation   $\tau$ for the
scalar curvature,  $\tau_{\nu}$ for its unit normal derivative
along $\gamma$, $\tau_{\nu \nu}$ for  the Hessian ${\rm
Hess}(\tau)(\nu,\nu)$. We denote by
 $Y$  the unique
normalized Jacobi eigenvector along $\gamma$ and by  $\dot{Y}$ its
time-derivative.
The subprincipal  wave invariant $a_{\gamma 0}$ is then given by:
\begin{equation} a_{\gamma 0} = \frac{a_{\gamma, -1}}{L^{\#}}[ B_{\gamma 0;4} (2\beta^2 - \beta
-\frac{3}{4}) + B_{\gamma 0;0} ]\end{equation} where:

\noindent (a) $a_{\gamma, -1}$ is the principal wave invariant
(\ref{PRIN});

\noindent(b) $L^{\#}$ is the primitive length of $\gamma$;
$\sigma$ is its Morse index; $P_{\gamma}$ is its Poincar\'e map;

\noindent(c) $B_{\gamma 0; j}$ has the form:
$$B_{\gamma 0;j} = \frac{1}{L^{\#}} \int_o^{L^{\#}} [a\; |\dot{Y}|^4 +
 b_1\; \tau |\dot{Y}\cdot Y|^2 + b_2\; \tau {\rm Re\,} (\bar{Y} \dot{Y})^2
+c \;\tau^2 |Y|^4 + d \;\tau_{\nu \nu}|Y|^4 + e\; \delta_{j0} \tau
]ds  $$
$$ +\frac{1}{L^{\#}}\sum_{0\leq m,n \leq 3; m+n=3}
C_{1;mn}\;\frac{\sin ((n-m)\alpha)}{|(1-e^{i(m-n)\alpha})|^2}\;
\left |\int_o^{L^{\#}}  \tau_{\nu}(s)\bar{Y}^m\cdot Y^n(s)ds
\right |^2 $$
$$+\frac{1}{L^{\#}}\sum_{0\leq m,n \leq 3; m+n=3} C_{2;mn}\;{\rm Im\,}
\; \left\{\int_o^{L^{\#}} \tau_{\nu}(s)\bar {Y}^m\cdot Y^n(s)
\left[\int_o^s\tau_{\nu}(t)\bar {Y}^n\cdot Y^m(t)dt\right] ds
\right\}$$ for various universal  coefficients.
 The coefficients $a, b_1, b_2, c, d, C_{j; mn}$ are universal and can be determined from
computable special cases. A somewhat computable case is that where $\gamma$ is a rotationally
invariant geodesic of length $L$ of a surface of revolution, in which case $\tau, \tau_{\nu}, \tau_{\nu \nu}$ are constants
and $y = e^{\frac{2 \pi i s}{L}}$ (we assume here that $\tau \equiv 1$ on $\gamma$).  

If we iterate $\gamma$ (i.e. consider $\gamma^k$, traversing $\gamma$ k times), the Floquet invariants become
independent and the two $B_{\gamma 0; j} (j = 0, 4$) are independent. Thus each is a spectral invariant. This
is a general phenomenon \cite{G,G2,Z3}. The coefficients $B_{\gamma 0; j}$ are Birkhoff canonical form invariants.

\subsection{Bounded smooth plane domains}

Euclidean plane domains $\Omega$ are quite a different problem because the metric is flat (and known) and it is
the boundary which is unknown. We express it locally as a graph $y = f(x)$ over the $x$-axis.  The role
of closed geodesics is played by periodic trajectories of the biliiard flow on $\Omega$. In particular we consider
`two-bounce' or bouncing ball orbits, which are straight line segments hitting the boundary orthogonally at
both endpoints. They always exist if the domain has an isometric involution or if it is convex. 

Rather than giving a formula for the subprincipal wave invariant, we give the top order terms (in derivatives
of the boundary) for every wave invariant. 
Modulo terms involving $\leq 2j -
2$ derivatives, the wave trace 
invariants 
 take the form
\begin{equation}  \label{BGAMMAJSYMpre}  \begin{array}{lll} B_{\gamma^r, j -
 1} & = &    (4 L r) {\mathcal A}_r(0) i^{j - 1} \{ 2 (w_{{\mathcal G}_{1, j}^{2j, 0}})\;
 (h_{2r}^{11})^j f^{(2j)}(0) \\ && \\ && + 4 (w_{{\mathcal G}_{2, j + 1 }^{2j - 1, 3,
 0}}) \;
 (h_{2r}^{11})^j \frac{1}{2 - 2 \cos \alpha/2} ( f^{(3)}(0) f^{(2j - 1)}(0)) \\ && \\
 && +  4 (w_{\widehat{{\mathcal G}}_{2, j + 1 }^{2j - 1, 3, 0}})\; (h_{2r}^{11})^{j - 2}
 \sum_{q = 1}^{2r} (h_{2r}^{1 q})^3 ( f^{(3)}(0) f^{(2j - 1)}(0))\}.\end{array}  \end{equation}

 Here 
 \begin{itemize}

\item $h_{2r}^{pq}$ are the matrix elements of the inverse of the
Hessian $H_{2r}$ of the length function ${\mathcal L}$ in suitable
(Cartesian graph) coordinates at $\gamma^r$;

\item ${\mathcal A}_r(0)$ is an $\Omega$-independent  (non-zero)
 constant obtained from amplitude of  the principal terms at
the critical bouncing ball orbit.

\item $w_{{\mathcal G}_{1, j}^{2j, 0}}$ (etc.) are certain
non-zero combinatorial constants  associated to certain  Feynman diagrams (graphs).

\end{itemize}

In this case, the Floquet exponents are buried in the Hessian matrix elements.  A higher dimensional
generalization of this formula is given in \cite{HZ2}.

\section{\label{IRST} Inverse results}

We now give an idea of how one can prove inverse spectral results using the wave trace invariants. 

\subsection{Plane domains with an up-down symmetry}

\subsubsection{Domains with one symmetry}

The class
  $\mathcal D_{1, L}$ consists of simply connected real-analytic plane domains $\Omega$ satisfying:
\begin{itemize}

\item  (i) There exists an  isometric involution $ \sigma$ of $\Omega$;

\item (ii) $\sigma$ `reverses' a  non-degenerate bouncing ball orbit $  \gamma \to \gamma^{-1}$;

\item (iii) The lengths $2 r L$ of  all iterates $\gamma^r$ have  multiplicity one in $Lsp(\Omega)$,  and the eigenvalues of the linear
Poincare map $P_{\gamma}$ are  not
roots of unity;

\end{itemize}

Let Spec$(\Omega)$ denote the spectrum of the Laplacian $\Delta_{\Omega}$  of the
domain $\Omega$ with Dirichlet boundary conditions.

\begin{theo} \label{ONESYM1D}
Spec$: {\mathcal D}_{1, L} \mapsto \R_+^{{\bf N}}$ is 1-1. \end{theo}

 Here, $Lsp(\Omega)$ denotes  the length spectrum, i.e. the set of lengths of closed trajectories of the
billiard flow of $\Omega$ (see
\S 2 for backgroun on billiards).
By a bouncing ball orbit $\gamma$ is meant a 2-link periodic trajectory of the billiard flow. It corresponds
to  a line segment in the interior of $\Omega$
 which intersects $\partial \Omega$ orthogonally at both boundary points.  By rotating
and translating $\Omega$ we may assume that $\gamma$ is vertical and that  $A = (0,-\frac{L}{2})$.
In a strip $T_{\epsilon}(\overline{AB})$, we may locally express
 $\partial \Omega = \partial \Omega^+ \cup \partial \Omega^-$ as the union of two graphs
over the $x$-axis, namely
\begin{equation}\label{GRAPHS}  \partial \Omega^+ = \{ y = f_+(x),\;\;\; x \in (-\epsilon, \epsilon )\},\;\; \partial \Omega^- = \{ y = f_-(x),\;\;\; x \in
(-\epsilon, \epsilon) \}. \end{equation}
The 
symmetry assumptions (i) - (ii)  is that
 $f_+(x) =   - f_-(x).$  
 Condition (iii) on the  multiplicity of $2 L$  means that just the two orbits  $\gamma, \gamma^{-1}$ have
length $2L$.   In the elliptic case, its eigenvalues $\{e^{\pm i \alpha}\}$ are
of modulus one and we are requiring that $\frac{\alpha}{2 \pi} \notin \Q.$ In the hyperbolic case, its eigenvalues
$\{e^{\pm \lambda}\}$ are real and they are never roots of unity in the non-degenerate case. These are generic
conditions in the class of analytic domains.

\begin{cor} \label{ONESYMCOR} Let ${\mathcal D}$ be the class of  real analytic plane domains with an isometric involution $\sigma$
satisfying:

\begin{itemize}

\item (i) $\sigma$ reverses the shortest closed billiard trajectory $\gamma$;

\item (ii) $r L_{\gamma}$ are of multiplicity one in $Lsp(\Omega).$

\end{itemize}

 Then Spec: ${\mathcal D} \mapsto \R_+^{{\bf N}}$ is 1-1. \end{cor}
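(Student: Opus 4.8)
The plan is to derive Corollary \ref{ONESYMCOR} from Theorem \ref{ONESYM1D} by showing that the class $\mathcal{D}$ described in the corollary is essentially a subclass of $\bigcup_L \mathcal{D}_{1,L}$, modulo verifying that the relevant nondegeneracy hypotheses hold. First I would observe that any $\Omega \in \mathcal{D}$ comes equipped with an isometric involution $\sigma$ that reverses the \emph{shortest} closed billiard trajectory $\gamma$, and that the shortest closed billiard trajectory is automatically a bouncing ball orbit: a 2-link trajectory realizing the minimal length must hit $\partial\Omega$ orthogonally at both endpoints, since otherwise one could shorten it. So conditions (i)--(ii) of the corollary already give conditions (i)--(ii) of the class $\mathcal{D}_{1,L}$ with $L = \half L_\gamma$ (the bouncing ball orbit has length $L_\gamma = 2L$ in the earlier notation).

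The only gap is condition (iii) of $\mathcal{D}_{1,L}$: non-degeneracy of $\gamma$ (i.e. $\det(I - P_\gamma) \neq 0$) and the condition that the eigenvalues of $P_\gamma$ are not roots of unity. The multiplicity-one hypothesis (ii) of the corollary supplies the ``$2rL$ has multiplicity one in $Lsp(\Omega)$'' part of (iii) directly. For the Poincar\'e map condition, the key point I would make is that $\Omega$ is \emph{real analytic}, so the billiard dynamics near $\gamma$ are analytic; the characteristic exponents of $P_\gamma$ are then either both real (hyperbolic/degenerate) or a conjugate pair on the unit circle (elliptic/parabolic). If $P_\gamma$ had eigenvalue $1$ then $\det(I-P_\gamma) = 0$ and $\gamma$ would sit in a one-parameter family or be parabolic, which one can argue contradicts the multiplicity-one assumption on the lengths $rL_\gamma$, since nearby periodic orbits of comparable length would appear; similarly a root-of-unity eigenvalue $e^{2\pi i p/q}$ forces $\gamma^q$ to be degenerate, and again an analyticity-plus-multiplicity argument rules this out on the relevant class. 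I would state this as: for real analytic domains with an involution reversing the shortest bouncing ball orbit, multiplicity one of $\{rL_\gamma\}$ in $Lsp(\Omega)$ implies $\gamma$ is strongly non-degenerate with no root-of-unity Floquet exponents, hence $\mathcal{D} \subset \bigcup_L \mathcal{D}_{1,L}$ (up to a rotation/translation normalization), and apply Theorem \ref{ONESYM1D}.

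The main obstacle I anticipate is precisely this last reduction: establishing rigorously that the multiplicity-one hypothesis on $Lsp(\Omega)$ forces the non-degeneracy and irrationality conditions on $P_\gamma$. This is a statement about billiard dynamics (bifurcation of periodic orbits from a degenerate or resonant one) rather than about spectra, and it genuinely uses real-analyticity to exclude pathological isospectral-looking length coincidences; in the degenerate case $\det(I - P_{\gamma^r}) = 0$ one cannot even write down the wave trace expansion \eqref{WEXP} in the clean Morse form, so one should really interpret the hypotheses of $\mathcal{D}$ as already implicitly excluding such behavior. A clean way to present the proof is therefore to remark that these are \emph{generic} conditions in the analytic class (as already noted after Theorem \ref{ONESYM1D}) and that the hypotheses of Corollary \ref{ONESYMCOR} are exactly engineered so that $\gamma$ is a non-degenerate bouncing ball orbit reversed by $\sigma$ with simple iterated lengths; granting this, $\Omega$ lies in some $\mathcal{D}_{1,L}$ after normalizing coordinates so that $\gamma$ is vertical with midpoint at the origin, and the corollary is immediate from Theorem \ref{ONESYM1D}.
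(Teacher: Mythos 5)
Your proposal captures the paper's first (and main) observation: the shortest closed billiard trajectory is automatically a bouncing ball orbit, which is why $\mathcal{D}$ maps into the union of the classes $\mathcal{D}_{1,L}$. The paper's proof is essentially a two-sentence remark: (a) shortest closed trajectory $\Rightarrow$ bouncing ball orbit, and (b) ``Its length is a spectral invariant.'' You have (a). What you do not quite make explicit is (b), and that step is load-bearing: Theorem~\ref{ONESYM1D} gives injectivity of Spec on each fixed-length slice $\mathcal{D}_{1,L}$, so to get injectivity on all of $\mathcal{D}$ you must know that two isospectral domains $\Omega_1,\Omega_2 \in \mathcal{D}$ land in the \emph{same} $\mathcal{D}_{1,L}$. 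This follows because the shortest length in $Lsp(\Omega)$ is recoverable from the spectrum (it is the first nonzero singularity of the wave trace, which is non-vanishing for a non-degenerate bouncing ball orbit). Your phrasing ``$\Omega$ lies in some $\mathcal{D}_{1,L}$'' leaves this matching of $L$'s between the two domains as a loose end, and without it the reduction to the theorem does not close.

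On the other hand, the concern you spend most of the proposal on --- that the Corollary as literally stated does not re-list the non-degeneracy / no-root-of-unity conditions of $\mathcal{D}_{1,L}$(iii) --- is a real imprecision in the paper's statement, and your instinct that ``multiplicity one'' alone need not force non-degeneracy (an isolated degenerate or resonant orbit could still be the unique closed orbit of its length) is sound. The paper simply leaves those hypotheses implicit in the Corollary, as you ultimately suggest reading it; the attempted dynamical argument you sketch to \emph{derive} non-degeneracy from multiplicity-one is not rigorous (and you acknowledge this), but it is also not needed if one reads the Corollary charitably. So: same overall route, one genuinely important step (spectral invariance of $L$) missing, and a correct but ultimately unnecessary excursion into bifurcation-theoretic territory on the non-degeneracy hypothesis.
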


The proof of the corollary from the Theorem is just to observe that any  shortest closed trajectory is automatically
a bouncing ball orbit. Its length is a spectral
invariant.

\subsubsection{Sketch of the proof}

 The
key point is determine the $2j-1$st and $2j$th Taylor coefficients
of the curvature at each reflection point from the $j-1$st wave
trace invariant for $\gamma$ and its iterates $\gamma^r.$
If the domain has the symmetries of an ellipse, 
 one obtains the Taylor coefficients immediately from (\ref{BGAMMAJSYMpre}), since the odd Taylor
 coefficients are zero. On the other hand, there is an obstruction
 to recovering the Taylor coefficients of $f$ when there is only
 one symmetry: namely, we must recover two Taylor coefficients
 $f^{(2j)}(0), f^{(2j - 1)}(0)$ for each new value of $j$ (the
 degree of the singularity). This is the principal obstacle to
 overcome.

 The expression
(\ref{BGAMMAJSYMpre}) for the Balian-Bloch invariants of $\gamma,
\gamma^2, \dots$ consists of two types of  terms, in terms of
their dependence on the iterate $r$. They have a common factor of
$2 r L (h_{2r}^{11})^{j - 2} {\mathcal A}_r(0) $, and after
factoring it out we obtain one term
$$ (h_{2r}^{11})^2 \{
(w_{{\mathcal G}_{1, j}^{2j, 0}}) f^{(2j)}(0) + \frac
{(w_{{\mathcal G}_{2, j + 1 }^{2j - 1, 3,
 0}})}{2 - 2 \cos
\alpha/2} f^{(3)}(0) f^{(2j - 1)}(0)\}  $$
 which  depends on the iterate $r$ through the coefficient
  $ (h_{2r}^{11})^2$, and one $$ (w_{\widehat{{\mathcal G}}_{2, j + 1 }^{2j - 1, 3, 0}}) \left(
\sum_{q = 1}^{2r} (h_{2r}^{1 q})^3 f^{(3)}(0) f^{(2j - 1)}(0)
\right)$$
 which depends on $r$ through the cubic sums $ \sum_{q = 1}^{2r}
(h_{2r}^{1 q})^3$ of inverse Hessian matrix elements
$h_{2r}^{pq}$. In order to `decouple' the even and odd
derivatives, it suffices to show that the functions
$(h_{2r}^{11})^2 $ and $\sum_{q = 1}^{2r} (h_{2r}^{1 q})^3$ are,
at least for `most' Floquet angles $\alpha$,  linearly independent
as functions of $ r\in \Z$, i.e. that $(h_{2r}^{11})^{-2} \sum_{q
= 1}^{2r} (h_{2r}^{1 q})^3$ is a non-constant function of $r$. It
is convenient to use the parameter $a = - 2 \cos \frac{\alpha}{2}$
and we write the dependence as $h_{2r}^{ij}(a)$.

 We therefore define the `bad' set of Floquet angles by
\begin{equation} \label{BAD} {\mathcal B} = \{a:\; \;\mbox{the sequence }\; \{(h_{2r}^{11}(a))^{-2}
\sum_{q = 1}^{2r} (h_{2r}^{1 q}(a))^3, \;\; r = 1, 2, 3, \dots \}
\; \mbox{is constant in }\; r\}. \end{equation} Using facts about
the finite Fourier transform and circulant matrices, we compute
that $ {\mathcal B} = \{0, 1, \pm 2\}.$  For Floquet angles outside of ${\mathcal B}$,
  we can determine
 all Taylor coefficients $f_+^{(j)}(0)$ from the wave
 invariants and hence the analytic domain.

\subsection{A higher dimensional inverse result}

It turns out that one can generalize the result to higher dimensions \cite{HZ2} as long as the domain
has $\Z_2$ symmetries along all coordinate axes. The  $(\Z/2\,\Z)^n$ symmetries   are the maps
\begin{equation} \label{SIGMAJ} \sigma_j: (x_1, \dots, x_n) \to (x_1, \dots, - x_j, x_{j +
1}, \dots, x_n). \end{equation}  The symmetry assumption implies that the
intersections of the coordinate axes with $\Omega$ are projections
of  bouncing ball orbits preserved by the symmetries.

We denote by $\mathcal D_{ L}$  the class of all bounded
real-analytic  domains  such
that:

\begin{equation} \label{DL}
\left\{\begin{array}{llllll}
   \text{(i)}\; \; \;\sigma_j: \Omega \to \Omega \; \text{is an isometry for all}\; j=1,\dots, n; \\

   \text{(ii)} \;\text{one of the coordinate axis bouncing ball orbits, called $\gamma$, is  of length}\; 2L \;
             \;  \;\\

   \text{(iii)} \;\text{the lengths}\; 2 r L \; \text{of  all iterates}\; \gamma^r (r =
             1, 2, 3, \dots) \; \text{have multiplicity one in}\; Lsp(\Omega); \\

   \text{(iv)}\; \gamma \;\;  \text{is non-degenerate, i.e. 1 is not an eigenvalue of  its Poincar\'e map }  P_{\gamma}; \\
     \;\;\;\;\;\;\;\text{if}\; \gamma \;\text{is elliptic and}\; \{e^{\pm i\al_1},.
   ..e^{\pm i \al_{n-1}}\}\; \text{are the eigenvalues of } \; P_{\gamma}\; , \text{we} \\
              \; \; \; \;\; \;\;\text {further require that}\; \{\al_1,..., \al_{n-1}\} \;
              \text{are linearly independent over}\; \mathbb Q. \; \text{We assume the }\\ \;\;\;\;\;
              \;\;
              \text{same independence  condition in the Hyperbolic case or mixed cases.}\\
\end{array} \right.
\end{equation}

Multiplicity one means that there exists precisely one closed
billiard trajectory of the given length up to time reversal. Let
Spec$_B(\Omega)$ denote the spectrum of the Laplacian
$\Delta_{\Omega}^B$  of the domain $\Omega$ with boundary
conditions $B$ (Dirichlet or Neumann).

\begin{theo} \label{ONESYM} \cite{HZ2} For Dirichlet (or Neumann)  boundary
conditions $B$, the map Spec$_B: {\mathcal D}_{L} \to
\R_+^{{\bf N}}$ is 1-1. \end{theo}

In other words, if two bounded real analytic domains $\Omega_1,
\Omega_2 \subset \R^n$ possessing the symmetries of an ellipsoid
and satisfying the non-degeneracy and length  assumptions of
(\ref{DL}) have the same Dirichlet (resp. Neumann) spectra, then
they are isometric. To our knowledge, the  only prior positive
result on the inverse spectral problem for higher dimensional
bounded domains is that a domain with the Dirichlet (or Neumann)
spectrum of a ball must be a ball.

The proof of Theorem \ref{ONESYM} is similar in nature to the proof of Theorem \ref{ONESYM1D} but
uses the analysis in  \cite{He} of the  much more complicated dependence on Floquet eigenvalues in higher dimensions.

\subsection{\label{SURFREVCALC}Spectral determination of simple surfaces of revolution}

A result which is somewhat parallel to Theorem \ref{ONESYM} is that convex analytic surfaces of revolution
(satisfying a non-degeneracy hypothesis)  are determined by their spectra (in this class). The
 profile curve of the surface of revolution is analogous to the `top half' of the boundary of the plane domain,
and the $S^1$ symmetry is analogous to the symmetry of the top and bottom halves.

The precise class of  metrics we consider are those  metrics $g$
on $S^2$ which belong to the
 class $\rcal^*$ of real analytic, rotationally
invariant metrics on $S^2$ with simple length spectrum in the
above sense and satisfying the following `simplicity' condition
$$\begin{array}{ll} \bullet & g = dr^2 + a(r)^2 d\theta^2 \\ & \\
\bullet & \exists ! r_0 : a'(r_0) = 0; \\ & \\
\bullet & \mbox{The Poincare map}\;\; \pcal_0\;\; \mbox{of}\;\;
r = r_0 \;\;\mbox{is elliptic of twist type} \end{array} $$

Convex analytic surfaces of revolution are examples, but
there are others. The unique isolated closed geodesic (at distance
$r_0$) is an elliptic orbit.

\begin{theo}\label{ISPSR} (\cite{Z3}) Suppose that $g_1, g_2$ are two real analytic metrics in the class $ \rcal^*$ on
$S^2$, i.e.  $(S^2, g_i)$ are simple surfaces of revolution
for which the meridian geodesic length is simple.   Then Sp($\Delta_{g_1}$) =
Sp($\Delta_{g_2}$) implies $g_1 = g_2.$ \end{theo}

We discuss the proof in \S \ref{SURFREV}. We note that the same kind of result should hold for Schr\"odinger
operators \eqref{SO} for fixed  $\hbar = 1$, where one metric $g \in \rcal^*$ is fixed and an $S^1$-invariant
potential $V$ is varied. However, the `simplicity' assumption probably  needs to be placed on $V$ and is probably
similar to the monotonicity assumption in \cite{DHV}.

\subsection{\label{ISODEF} Isospectral deformations and spectral rigidity}

As mentioned in the introduction, one may at least heuristically consider the map $Sp$ on the space of
metrics, potentials, or domains. The derivative of $Sp$ is the 
map $\delta g \to (\delta \lambda_1, \delta \lambda_2, \dots) $  from the tangent space to metrics, potentials or domains to sequenes. One may consider variations of the spectral functions, wave invariants etc. Some early works on variations
of eigenvalues are \cite{Be,BE,U}.

If $\gamma$ is an isolated, non-degenerate closed geodesic of $g$,
then for any deformation $g_t$ of  $g$,  $\gamma$ deforms smoothly
as a closed geodesic $\gamma_t$ of $g_t$ and one may define its
variation
\begin{equation} \dot{L_{\gamma}} = \frac{d}{dt}|_{t = 0} L_{\gamma_t}. \end{equation}
It is not hard to compute that \begin{equation} \label{DOTL}
\dot{L_{\gamma}} =  \int_{\gamma} \dot{g} ds,
\end{equation}
where $\dot{g}$ is viewed as a quadratic function on $TM$ and
$\gamma$ is viewed as the curve $(\gamma(t), \gamma'(t))$ in $TM$.

It follows that whenever the closed geodesics are non-degenerate
and of multiplicity one in $Lsp(M, g)$, an isospectral deformation preserves lengths and so
\begin{equation}\label{GDOT}  \int_{\gamma} \dot{g} ds = 0, \;\;\; \forall
\gamma.
\end{equation}
In \cite{GK} and in many subsequent articles, it is shown that there cannot exist such a deformation 
among negatively curved surfaces. The proof is that there does not exist a (non-trivial) symmetric tensor
$\dot{g}$ satisfying \eqref{GDOT}. To our knowledge the analogous result for domains with boundary is unknown.
The analogue of a negatively curved surface is a hyperbolic billiard table. Hyperbolicity refers to the Anosov property
of the billiard flow. Such a table could be the complement of a disc (or more than one disc) in a negatively curved
surface, or a plane domain whose boundary consists of a finite number of concave curves meeting at corners. 

\begin{mainprob} Can one generalize the result of \cite{GK} to hyperbolic billiards?  Does a hyperbolic billiard table admit isospectral deformations? \end{mainprob}

\section{Geometric calculation of spectral invariants}

In this section, we briefly explain how one calculates the wave trace invariants explicitly in terms
of geometric invariants or (in the boundary case) in terms of the boundary defining function.

\subsection{Parametrix `path integral' versus canonical Hamiltonian approach}

Roughly speaking, one has two approaches to constructing the `propagator' $U(t) = e^{ - i t \sqrt{\Delta_g}}$
in quantum mechanics and studying its trace. One is the `Hamiltonian approach' or `Birkhoff canonical forms' approach in which one constructs
the propagator locally around a closed geodesic $\gamma$ by adapting the canonical formalism (creation/annihilation operators,
harmonic oscillators, etc.) to a tubular neighborhood of $\gamma$. The canonical formalism is essentially the
representation theory of the Heisenberg algebra and the symplectic algebra on $L^2(\R^n)$. It is not obvious that
one can `transplant' the canonical formalism onto a Riemannian manifold, but this is what the theory of quantum
Birkhoff normal forms achieves. Its origins lie in constructions of Gaussian beams by Babich, Lazutkin, Ralston
and others (see \cite{BB}).  Although Gaussian beams only exist for  stable elliptic closed geodesics,  the Birkhoff canonical
form construction works locally around any non-degenerate closed geodesic (and even around degenerate ones).

The second approach is the  Lagrangian (or `path integral') approach in which one constructs a parametrix for the wave group. Unlike the canonical
forms approach, there is nothing canonical about the parametrix construction. It is an art to construct a parametrix
well adapted to a given problem. The parametrix approach is the  traditional one and the reader is probably familiar with the heat kernel parametrix (if not  with the
term `parametrix').

Let us review the original wave kernel parametrix for \eqref{Et} due to J. Hadamard (1921). 
On a Riemannian manifold without boundary,  one can construct  a  Hadamard
parametrix for $\cos t \sqrt{\Delta} (x,x)$ for  small times, i.e. an approximation for the Schwartz kernel
$E(t, x, y)$  of the solution operator
of the Cauchy problem,
\begin{equation} \left\{ \begin{array}{ll} (\frac{\partial}{\partial t}^2 - \Delta ) u = 0& \\
u|_{t=0} = f & \frac{\partial}{\partial t} u |_{t=0} = 0.
\end{array}\right.,\end{equation}
The parametrix is an oscillatory integral
\begin{equation}  E(t, x, y) \sim \int_{0}^{\infty} e^{i \theta (r^2-t^2)}
\sum_{j=0}^{\infty} U_j(x,y) \theta_{\mbox{reg}}^{\frac{n-1}{2} - j}
d\theta
 \;\;\;\mbox{mod}\;\;C^{\infty}  \end{equation}
where the Hadamard-Riesz coefficients $U_j$ are  determined
inductively by the transport equations
\begin{equation}\begin{array}{l}
 \frac{\Theta'}{2 \Theta} U_0 + \frac{\partial U_0}{\partial r} = 0\\ \\
4 i r(x,y) \{(\frac{k+1}{r(x,y)} +  \frac{\Theta'}{2 \Theta})
U_{k+1} + \frac{\partial U_{k + 1}}{\partial r}\} = \Delta_y WU_k.
\end{array}\end{equation} The solutions are given by:
\begin{equation}\label{HR} \begin{array}{l} U_0(x,y) = \Theta^{-1/2}(x,y) \\ \\
U_{j+1}(x,y) =  \Theta^{-1/2}(x,y) \int_0^1 s^j \Theta(x,
x_s)^{1/2} \Delta_2  U_j(x, x_s) ds
\end{array} \end{equation}
where $x_s$ is the geodesic from $x$ to $y$ parametrized
proportionately to arc-length and where $\Delta_2$ operates in the
second variable. Also, $r^2(x, y)$ is the distance squared,  $dV_g = \Theta(x, y) dy$   in normal coordinates centered at $x$, and $\theta_{\mbox{reg}}^{\frac{n-1}{2} - j}$ refers to the regularization of this distribution by analytic continuation (M. Riesz).

The parametrix is a `geometric construction' of the wave group and one may determine the singularities of the wave
trace by expressing it in terms of the parametrix. The Hadamard parametrix above is useful to obtain the singularity
at $t = 0$ and contains the information of the heat trace there. 
But the above  parametrix is only valid for small $t$ since it uses $r^2(x, y)$ and $\Theta(x, y)$. To obtain information 
at much longer times such as $t \in Lsp(M,g)$ one needs to construct a parametrix which is valid at that time
around a closed geodesic. There are a number of clever ways to do this but none are simple except in special
cases such as manifolds with no conjugate points. In that case one may follow the method of Selberg (1956)
to construct the wave kernel on the universal cover and summing over the deck transformation group (see
\cite{Z1} for references and background).

\subsection{\label{BB} Domains with boundary}

In the case of bounded domains, it is difficult to construct a parametrix that is valid in the glancing region
of geodesics tangent to the boundary.  A microlocal parametrix for the Dirichlet or Neumann
even wave group  $E_B^{\Omega}(t) = \cos t \sqrt{\Delta_B}$ near a
transversal reflecting ray  was constructed by J. Chazarain
\cite{Ch} (see also \cite{GM, PS} for more details ).

To prove Theorem \ref{ONESYM1D}, the author  used the construction of the wave group (or rather, Green's function)
using  the method of layer
potentials as in Balian-Bloch \cite{BB1,BB2}  rather than using a parametrix. This method is used universally
in the physics literature to make calculations of eigenvalues and eigenfunctions and it is natural to use it
for inverse spectral problems.

We fix
 a non-degenerate bouncing ball orbit  $\gamma$  of length
 $L_{\gamma}$, and
  let  $\hat{\rho} \in C_0^{\infty}(L_{\gamma} - \epsilon, L_{\gamma} + \epsilon)$ be a cutoff,  equal to one on an interval  $(L_{\gamma} - \epsilon/2, L_{\gamma} + \epsilon/2)$ which contains
no other lengths in Lsp$(\Omega)$ occur in its support. The wave trace invariants associated to $\gamma$ are
the coefficients of the asymptotic expansion as $k \to \infty$ of 
\begin{equation} \label{RW} R_{\rho B}^{\Omega}(k + i \tau)
= \int_0^{\infty} \hat{\rho}(t) e^{i (k + i \tau) t}
 E_{B}^{\Omega}(t) dt. \end{equation}
When $\gamma, \gamma^{-1}$ are the unique closed orbits of length
$ L_{\gamma}$,  it follows from the Poisson relation for manifolds
with boundary (see \S 3 and \cite{GM}) that \eqref{RW} has the expansion
\begin{equation} \label{PR} Tr 1_{\Omega} R_{\rho}(k + i \tau) \sim  e^{ (i k - \tau) L_{\gamma}}  \sum_{j = 1}^{\infty} (B_{\gamma, j}
+  B_{\gamma^{-1}, j}) k^{-j},\;\;\; k \to \infty \end{equation}
with coefficients $B_{\gamma, j},  B_{\gamma^{-1}, j}$ determined
by the jet of $\Omega$ at the reflection points of $\gamma$. The
coefficients $B_{\gamma, j},  B_{\gamma^{-1}, j}$ are thus
essentially the same as the wave trace
 coefficients at the singularity  $t = L_{\gamma}$.

As mentioned above, it is the art in  inverse spectral theory to find a method by which the coefficients
can be calculated explicitly. One might use a well-chosen parametrix around $\gamma$ or use a canonical form.
In \cite{Z4}, we use the layer potential method of \cite{BB1,BB2} to  express the Dirichlet (resp. Neumann)
resolvent in terms of the `free resolvent'  $R_0(k + i \tau) = - (\Delta_0 + (k +
i \tau)^2)^{-1}$  on $\R^2$, in the form,
\begin{equation} \label{POT}  R_{\Omega}(k + i \tau) =
 R_0(k + i \tau) - {\mathcal D} \ell(k + i \tau) (I +  N(k + i \tau))^{-1}
 \gamma
 {\mathcal S} \ell^{tr}(k + i \tau). \end{equation}
Here,   $\gamma: H^{s}(\Omega) \to
H^{s-1/2}(\partial \Omega)$ is the restriction to the boundary,
and
 ${\mathcal D} \ell (k + i \tau)$ (resp. ${\mathcal S} \ell (k + i \tau)$) is the double
 (resp. single) layer potential, i.e.   the operator from $H^s(\partial \Omega) \to H^{s+ 1/2}_{loc}
(\Omega)$ defined by
\begin{equation} \label{layers}\left\{ \begin{array}{l} {\mathcal S} \ell(k + i \tau)  f(x) = \int_{\partial \Omega} G_0(k + i \tau, x, q) f(q)
d s (q), \\ \\   {\mathcal D} \ell(k + i \tau)f(x) =
\int_{\partial \Omega}\frac{\partial}{\partial \nu_y} G_0(k + i
\tau, x, q) f(q) ds(q),
 \end{array} \right. \end{equation}
where $ds(q)$ is the arc-length measure on $\partial \Omega$,
where $\nu$ is the interior unit normal to $\Omega$. Also, $\SL^{tr}$ is its
transpose (from the interior to the boundary), and $G_0(z, x, y)$
is the free Green's function, i.e. the  kernel of $R_0(z)$.
Further,
\begin{equation} \label{blayers} N(k + i \tau)f(q) =  2 \int_{\partial
\Omega}\frac{\partial}{\partial \nu_y} G_0(k + i \tau, q, q')
f(q') ds(q')
\end{equation} is the boundary integral operator induced by ${\mathcal D} \ell$. 
One may simplify the trace by combining the interior and exterior problems.  We write $L^2(\R^2) = L^2(\Omega)
\oplus L^2(\Omega^c)$
 and  let $R_{N }^{\Omega},$ resp. $ (k + i \tau), R_{ D}^{\Omega^c}(k + i \tau)$ denote the Neumann resolvent
 on the exterior domain, resp. the Dirichlet resolvent on the interior domain.  We then regard
 $R_{ D}^{\Omega^c}(k + i \tau)
 \oplus  R_{N }^{\Omega} (k + i \tau) $ as an operator on this
 space. We cycle around the layer potentials in (\ref{POT}) when
 taking the trace to obtain,

\begin{equation} \label{IO}
\begin{array}{l}
 Tr_{\R^2} [R_{ D}^{\Omega^c}(k + i \tau)
 \oplus  R_{N }^{\Omega} (k + i \tau)  - R_{0 }(k + i \tau)] \;= \;  \frac{d}{d k } \log \det
 \bigg(I + N(k + i \tau)\bigg)
,\end{array}
\end{equation}
where the determinant is the usual Fredholm determinant. The main point then is to prove the existence
of the following asymptotic expansion:

\begin{prop} \label{MAINCOR}  Suppose that $L_{\gamma}$ is the only length
in the support of $\hat{\rho}$. Then,
$$\int_{\R} \rho(k -
\lambda) \frac{d}{d \lambda} \log \det (I + N(\lambda + i \tau)) d
\lambda  \sim \sum_{j = 0}^{\infty} B_{\gamma; j} k^{-j}, $$ where
$B_{\gamma; j}$ are the wave invariants of $\gamma$.
\end{prop}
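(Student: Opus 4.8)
The plan is to establish Proposition \ref{MAINCOR} by extracting, from the Fredholm determinant identity \eqref{IO}, a finite-dimensional oscillatory integral whose stationary phase expansion around the bouncing ball orbit $\gamma$ produces exactly the wave invariants. First I would localize: the cutoff $\hat\rho$ isolates the single length $L_\gamma$ in $Lsp(\Omega)$, so by the Poisson relation for domains (\S 3, \cite{GM}) the integral $\int_\R \rho(k-\lambda)\frac{d}{d\lambda}\log\det(I+N(\lambda+i\tau))\,d\lambda$ already differs by a smoothing term from the microlocal contribution of $\gamma$ and its time reversal; so it suffices to analyze the determinant microlocally near the two reflection points of $\gamma$. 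The key structural fact is that $N(\lambda+i\tau)$ is a Fourier integral operator on $\partial\Omega$ (of order $0$, with a logarithmic-type kernel built from $G_0$), and $\log\det(I+N)$ can be expanded as $\sum_{m\ge 1}\frac{(-1)^{m+1}}{m}\operatorname{Tr}N^m$; only the terms whose composed canonical relation passes through $\gamma$ after the right number $m=2r$ of reflections contribute a singularity at $t=rL_\gamma$.

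Next I would pass to an explicit oscillatory-integral representation. Each $\operatorname{Tr}N^{2r}$ is an integral over $(\partial\Omega)^{2r}$ of a product of free Green's-function kernels $\partial_\nu G_0(\lambda+i\tau,q_j,q_{j+1})$; inserting the large-$|\lambda|$ asymptotics of $G_0$ (a Hankel function, hence $\sim |\lambda|^{(n-1)/2}$ times $e^{i\lambda|q_j-q_{j+1}|}$ times a symbol in $1/\lambda$) turns $\operatorname{Tr}N^{2r}$ into an oscillatory integral with phase $\Phi = \sum_j |q_j-q_{j+1}|$, i.e. the billiard length functional $\mathcal L$ restricted to $2r$-gons inscribed in $\partial\Omega$. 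The critical points of $\Phi$ are the closed billiard trajectories of combinatorial length $2r$; by the multiplicity-one hypothesis the only one near $\gamma$ is $\gamma^r$ itself, which is non-degenerate because $\det(I-P_\gamma)\ne 0$ forces the Hessian $H_{2r}$ of $\mathcal L$ at $\gamma^r$ to be non-singular. Applying stationary phase in the $2r$ tangential variables (after integrating out the radial/scaling variable $\theta$ coming from $G_0$'s integral representation) yields an asymptotic expansion in powers of $1/k$ whose coefficients are universal polynomials in the Taylor jet of the boundary defining function at the reflection points, contracted against powers of $H_{2r}^{-1}$ — this is precisely the structure displayed in \eqref{BGAMMAJSYMpre}. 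Summing the $r$-contributions against $\hat\rho$ and using \eqref{PR} identifies the resulting coefficients with $B_{\gamma;j}$, the wave trace invariants, since both are computed from the same microlocal data of $\cos t\sqrt{\Delta_B}$ at $t=L_\gamma$ (the agreement of the two bookkeeping schemes is the content of the Anderson--Melrose/Guillemin--Melrose parametrix comparison).

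The main obstacle, and the step requiring the most care, is controlling the operator $(I+N(\lambda+i\tau))^{-1}$ — equivalently, justifying the $\log\det$ expansion $\sum_m \frac{(-1)^{m+1}}{m}\operatorname{Tr}N^m$ and showing the tail sums contribute no singularity near $t=rL_\gamma$. Although $\tau>0$ makes $N$ a contraction for $|\lambda|$ large so that the Neumann series for $\log\det(I+N)$ converges, one must show that (a) only finitely many terms $m=2,4,\dots,2r$ contribute to each wave coefficient $B_{\gamma;j}$ modulo lower order, and (b) the geometric series remainder, whose phase involves sums of more than $2r$ chord lengths, has no stationary point producing a singularity at the length $rL_\gamma$ — this is a transversality/positivity argument on the billiard length functional, using that any inscribed polygon of more links than the minimal periodic trajectory is strictly longer, together with the exponential damping $e^{-\tau(\text{length})}$. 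A secondary technical point is the glancing/tangency region on $\partial\Omega$: the asymptotics of $G_0(q,q')$ degenerate when $q,q'$ are nearly conjugate along the boundary, but since the bouncing ball orbit $\gamma$ hits $\partial\Omega$ transversally (orthogonally), a fixed microlocal neighborhood of $\gamma$ avoids the glancing set, so one simply cuts off to that neighborhood and absorbs the complement into the smoothing error. Once these points are in place, the existence and form of the expansion $\sum_j B_{\gamma;j}k^{-j}$ follow from the stationary phase lemma, and the identification with the wave invariants is then a matter of matching the leading term with \eqref{PRIN} and invoking the uniqueness of the singularity expansion \eqref{WTE}.
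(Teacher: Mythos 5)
Your overall strategy is the same as the paper's: pass through the Fredholm determinant identity \eqref{IO}, expand in powers of $N$, recognize $\operatorname{Tr} N^m$ as an oscillatory integral with phase the billiard length functional, apply stationary phase at the non-degenerate critical point $\gamma^r$, and control the tail of the series using the $\hat\rho$-localization. Differentiating the Mercator series $\log\det(I+N) = \sum_m \frac{(-1)^{m+1}}{m}\operatorname{Tr}N^m$ and the paper's finite geometric expansion \eqref{GS} of $(I+N)^{-1}$ produce the same truncated sums $\sum_M (-1)^M \operatorname{Tr}[N^M N']$, so that bookkeeping difference is cosmetic.

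The step that does not go through as written is your treatment of the diagonal singularity of $N$. You call it a ``secondary technical point'' and propose to localize the $q_j$'s to a fixed neighborhood of the two reflection points of $\gamma$ and absorb the complement into a smoothing error. But the kernel of $N(\lambda + i\tau, q, q')$ is singular at $q = q'$ (via the Hankel function, a logarithmic singularity in dimension two), so the complement of a fixed localization is not a smoothing error: the non-stationary phase estimate you invoke requires a smooth amplitude, and in the region where some intermediate chord $|q_j - q_{j+1}|$ tends to zero the amplitude diverges, while the $\hat\rho$-cutoff constrains only the total length, not each individual chord. The paper's resolution is the $k$-dependent split $N = N_0 + N_1$ via the cutoff $\chi(k^{1-\delta}|q-q'|)$, $\delta > 1/2$ (see \eqref{N01}--\eqref{N01DEF}): $N_0$ is supported within $O(k^{-(1-\delta)})$ of the diagonal, where $N$ is of Airy/pseudodifferential type, and $N_1$ is the transversal semiclassical Fourier integral piece; the crucial technical lemma is that $N_0 \circ N_1$ is again a Fourier integral operator associated to the billiard map but of order $-1$, so in the binomial expansion \eqref{BINOMIAL} each $N_0$-factor reduces the order and can be stripped off term by term, while the residual $N_0^M$ terms (composed with a cutoff to $\gamma$) do not contribute to the asymptotics. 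Without this (or an equivalent) argument, the expansion you propose is not justified. The remainder of your sketch---stationary phase on the billiard length phase, length growth together with exponential damping to dismiss long orbits, and identification with the wave invariants via the singularity expansion---does track the paper.
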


One then shows that $N(k + i\tau)$ is a semi-classical Fourier integral operator quantizing the billiard map
on the boundary. Let us explain these terms.
The billiard map $\beta : B^* \partial \Omega \to B^* \partial \Omega$ on the ball bundle of $\partial \Omega$  is defined as follows: given $(y, \eta) \in B^* \partial \Omega$, with $|\eta| < 1$ we let $(y, \zeta) \in S^* \Omega$ be the unique inward-pointing unit covector at $y$ which projects to  $(y, \eta)$ under the map $T^*_{\partial \Omega} \Omega \to T^* \partial \Omega$. Then we follow the geodesic (straight line) determined by $(y, \zeta)$ to the first place it intersects the boundary again; let $y' \in Y$ denote this first intersection, and   let $\eta'$ be the projection of $\zeta$ to $B^*_{y'} \partial \Omega$. Then we define
$$
\beta(y, \eta) = (y', \eta').
$$
We say that $N(k + i \tau)$ quantizes the billiard map in that its Schwartz kernel 
is (almost) given by 
$$N(k + i \tau, y, y') \sim
 (k + i \tau)^{(n-2)} e^{i (k + i \tau) |y-y'|} \tilde b( k |y-y'|),
$$
where $\tilde b(t)$ has an expansion in inverse powers of $t$ as $t \to \infty$, with leading term $\sim t^{(- n+1)/2}$. This is
proved in \cite{HaZe,Z4}. The kernel is a semiclassical FIO of order $0$ as long as $|y -y'| > k^{-\half} \log k$.   We separate out the tangential and
transversal parts of $N$ by introducing  a cutoff of the form
$\chi(k^{1 - \delta} |q - q'| )$ to the diagonal, where $\delta >
1/2$ and where $\chi \in C_0^{\infty}(\R)$ is a cutoff to a
neighborhood of $0$. We then put
\begin{equation} \label{N01}  N(k + i \tau) = N_0(k + i \tau) + N_1(k + i
\tau), \;\; \mbox{with}  \end{equation}
\begin{equation} \label{N01DEF} \left\{\begin{array}{l}  N_0(k + i
\tau, q, q') = \chi(k^{1 - \delta} |q - q'| ) \;  N(k + i \tau, q,
q'), \\ \\ N_1(k + i \tau, q, q') = (1 - \chi(k^{1 - \delta} |q -
q'| ))\; N(k + i \tau, q, q'). \end{array} \right.
\end{equation}
The term $N_1$ is a semiclassical Fourier integral kernel
quantizing the billiard map, while $N_0$ behaves like an Airy
operator close to the diagonal with the singularity of a
homogeneous pseudodifferential operator on the diagonal.

The advantage of this approach over the canonical forms method or using a Chazarain (or other parametrix)
is that the kernel $N(k, q, q')$ is independent of the boundary except for the restriction of the layer potential
to the given boundary. In some sense the expansion above is `canonical'. The
phase
$|y - y'|$ on $\partial \Omega \times \partial \Omega$ is the generating function of the billiard map as long 
as $\Omega$ is  convex in the sense of symplectic geometry. If $\Omega$ fails to be convex, it generates a canonical
relation which is larger than the graph of $\beta$, causing some technical problems that can be handled \cite{HaZe}. 
One also must take into account the part of the kernel very close to the diagonal, which is particularly important
if one is studying billiard trajectories that creep along the boundary but it is not important if one considers
trajectories that are transverse to the boundary such as bouncing ball orbits.

At least formally, we then
 expand  $(I +  N(k + i \tau))^{-1}$ in a finite geometric
series plus remainder:
\begin{equation} \label{GS} (I \!+\! N(\lambda \!+\! i \tau))^{-1} = \sum_{M = 0}^{M_0} (-1)^M \; N(\lambda)^M
+
 (-1)^{M_0 + 1} \; N(\lambda)^{M_0 + 1}  (I
\!+\! N(\lambda \!+\! i \tau))^{-1}. \end{equation} 
We then write
\begin{equation} \label{BINOMIAL} (N_0 + N_1)^M = \sum_{\sigma: \{ 1, \dots, M\} \to \{0, 1\}}
N_{\sigma(1)} \circ N_{\sigma(2)} \circ \cdots \circ
N_{\sigma(M)}. \end{equation} We  regularize $N^M$ by eliminating
the factors of $N_0$ from each of these terms. This is obviously
not possible for the term $N_0^M$ but  it is possible for the
other terms. By explicitly writing out the composition in terms of
Hankel functions and using the basic identities for these special
functions, we prove that $N_0 \circ N_1 \circ \chi_0 (k + i \tau,
\phi_{1}, \phi_{2}) )$ is a semiclassical Fourier integral
operator on $\partial \Omega$ of order $-1$ associated to the
billiard map. Thus,  composition with $N_0$ lowers the order. The
remaining terms $N_0^M$, when composed with a cut-off to $\gamma$,
do not contribute asymptotically to the trace.

The successive removal of  the factors of $N_0$ thus gives a
semi-classical quantization of the billiard map near $\gamma$. We
then calculate the traces of each term by the stationary phase
method to obtain the expansion \eqref{PR}. The coefficients containing the top derivatives for
each power of $k$ are rather straightforward to calculate explicitly.

\subsection{\label{BIRK} Birkhoff canonical form} So far we have described parametrix methods for evaluating wave trace
coefficients.
We now describe the Birkhoff canonical form and the sense in which it is canonical.
Let $\gamma$ be a non-degenerate  closed geodesic on an
$n$-dimensional Riemannian manifold, and at first let us  assume
it to be elliptic. For each transversal quantum
number $q \in \Z^{n-1}$, there was an approximate eigenvalue of
the form
\begin{equation} \label{APPROXEIG} \lambda_{kq} \equiv r_{kq} +
\frac{p_1(q)}{r_{kq}} + \frac{p_2(q)}{r_{kq}^2} +
...\end{equation} where
$$r_{kq} = \frac{1}{L} (2 \pi k + \sum_{j=1}^n (q_j + \frac{1}{2}) \alpha_j)$$
where the coefficients are polynomials of specified degrees and
parities. We refer to \cite{BB} for a clear exposition and for
details.

A natural question is whether the wave invariants $a_{\gamma j}$
of (\ref{WEXP}) can be determined from the quasi-eigenvalues
(\ref{APPROXEIG}), i.e. from the polynomials $p_j(q)$.   The answer is `yes' and in effect 
(\ref{APPROXEIG}) is the canonical form. 

To put $\Delta$ into normal form, is first  to
conjugate it
  into a distinguished
maximal abelian algebra ${\mathcal A}$ of pseudodifferential
operators on a model space, the cylinder
 $S^1_L \times \R^n$, where $S^1_L$ is the circle of length $L$.
  The algebra is  generated by the tangential operator
$D_s:=\frac{\partial}{i \partial s}$ on $S^1_L$ together with the
transverse action operators. The nature of these action operators
depends on $\gamma$. When $\gamma$ is elliptic, the action
operators are harmonic oscillators
$$I_j=I_j(y,D_y) := \frac{1}{2} (D_{y_j}^2 + y_j^2),$$
while in the real  hyperbolic case they have the form
$$I_j = y_j D_{y_j} + D_{y_j} y_j.$$
When $\gamma$ is non-degenerate,  they involve some mixture of
these operators (and also complex hyperbolic actions)  according
to the spectral decomposition of $P_{\gamma}$.
 For notational simplicity we restrict to the elliptic case and
 put
$$H_{\alpha}:= \frac{1}{2}\sum_{k=1}^n \alpha_k I_k $$
where $e^{\pm i \alpha_k}$ are the eigenvalues of the Poincare map
$P_{\gamma}$.

To put $\Delta$ into normal form is to conjugate it to the model
space and algebra as a function of $D_s$ and the action operators.
The conjugation is only defined in a neighborhood of $\gamma$ in
$T^*M-0$, i.e. one constructs  a microlocally elliptic
  Fourier Integral operator $W$
from the conic neighborhood  of $\R ^+\gamma$ in $T^*N_{\gamma}-0$
to a conic neighborhood of $T^*_{+}S^1_L$ in $T^*(S^1_L \times
R^n)$ such that:

\begin{equation} \label{QBNF} W \sqrt{\Delta_{\psi}}W^{-1}
 \equiv {\mathcal D}  +
\frac{\tilde{p}_1(\hat{I}_1,\dots,\hat{I}_n)}{L {\mathcal D}} +
 \frac{\tilde{p}_2(\hat{I}_1, \dots,\hat{I}_n)}{(L {\mathcal D})^2}
+\dots+\frac{\tilde{p}_{k+1}(\hat{I}_1,\dots,\hat{I}_n)}{(L
{\mathcal D})^{k+1}}+ \dots \end{equation} where the numerators
$p_j(\hat{I}_1,...,\hat{I}_n),
\tilde{p}_j(\hat{I}_1,...,\hat{I}_n)$ are polynomials of degree
j+1 in the variables $\hat{I}_1,...,\hat{I}_n$,  where $W^{-1}$
denotes a microlocal inverse to $W$. Here, ${\mathcal D} =  D_s +
\frac{1}{L}H_{\alpha}.$ The kth remainder term lies in the space
$\oplus_{j=o}^{k+2} O_{2(k+2-j)}\Psi^{1-j}$, where $\Psi^s$
denotes the pseudo-differential operators on the model space of
order $s$ and where  $O_{2(k+2-j)}$ denotes the operators whose
symbols vanish to the order $2(k + 2 - j)$ along $\gamma$. We
observe that (\ref{QBNF}) is an operator version of
(\ref{APPROXEIG}).
\bigskip

The inverse result of \cite{G}, see also \cite{Z3, Z4}] is:
\bigskip

\noindent{\bf Theorem  } {\it Let $\gamma$ be a non-degenerate
 closed geodesic. Then the quantum Birkhoff normal form
around $\gamma$ is a spectral invariant; in particular the
classical Birhoff normal form is a spectral invariant.}
\bigskip

In other words, one can determine the polynomials
$p_j(\hat{I}_1,...,\hat{I}_n)$ from the wave trace invariants of
$\Delta$ at $\gamma.$ The key point is that one can construct the normal form explicity by re-scaling
the metric around $\gamma$ and making a step-by-step conjugating to Hermite operators and creation/annihilation
operators first define by Babich, Lazutkin and others (see \cite{BB} for background).  Thus, the algorithm is quite
concrete and leads to computable spectra invariants of Theorem \ref{BNFDATA}.  But to date, the formulae have not been applied
to any concrete inverse spectral problems due to their complexity. In \cite{Z2, HZ2} we found that inverse results could
be proved in the boundary case  by just keeping track of the highest order derivative terms in the wave invariants. It is
possible that the argument could be generalized to some metric cases.


\subsection{\label{SURFREV} Proof of Theorem \ref{ISPSR} on spectral determination of analytic simple surfaces of revolution}

As mentioned above, surfaces of revolution are the surfaces most analogous to bounded plane domains in that both
are defined by a profile function of one real variable. This suggests that one should be able to prove an analogue
of Theorem \ref{ONESYM1D} for analytic surfaces of revolution. In fact, it was proved first and by a normal forms method.
To our knowledge, it remains the only inverse spectral result for metrics which was proved by using Birkhoff normal
forms, and therefore we briefly recall the method.  Recently, similar methods have been used in the $\hbar$  potential
problem for \eqref{SO} in several settings \cite{CdVG,CdV2,GW,GU}.  

The proof is based on quantum Birkhoff normal forms for the
Laplacian $\Delta$. But the special feature of simple surfaces of
revolution is that there exists a global Birkhoff normal form as
well as local ones around the critical closed orbit or the
invariant tori. It was constructed in \cite{CdV4} and expresses the fact tht $\Delta$ is a toric integrable
Laplacian in the following sense: there exist commuting first
order pseudo-differential operators $\hat{I}_1, \hat{i}_2$ such
that:
\begin{itemize}

\item the joint spectrum is integral, i.e.
 $Sp(\lcal) \subset \Z^2 \cap \Gamma + \{\mu\}$ where $\Gamma$ is the cone $I_2 \geq |I_1|$ in $\R^2$.

\item The square root of $\Delta$ is a first order polyhomogenous function  $\sqrt{\Delta} =
\hat{H}(\hat{I}_1, \hat{I_2})$ of the action operators.
\end{itemize}

By polyhomogeneous, we mean that $\hat{H}$ has an asymptotic
expansion in homogeneous functions of the form:
$$\hat{H} \sim H_1 +  H_{-1} + \dots, \;\;\;\;\;H_j( r I) = r^j
H_j(I).$$ The principal symbols $I_j$ of the $\hat{I}_j$'s
generate a classical Hamiltonian torus action on $T^*S^2 - 0$.
There is no term of order zero.
It follows that
$$Sp(\sqrt{\Delta_g}) = \{ \hat{H} ( N + \mu): N \in \Z^2 \cap \Gamma_o\},$$
where the eigenvalues have expansions
$$\lambda_N \sim H_1(N + \mu) + H_{-1}(N + \mu) + \dots.$$

\begin{theo}(\cite{Z2}) \label{SRNORM} Let $(S^2, g)$ be an analytic simple surface of revolution with simple
length spectrum.  Then the normal form $\hat{H}(\xi_1, \xi_2)$  is
a spectral invariant.\end{theo}

The normal form and the proof are very different from the
non-degenerate case in \cite{G, Z3, Z4}. Recently, similar kinds of results have been proved in other integrable
settings \cite{CPV,ChV}

To complete the proof of  Theorem \ref{ISPSR}, we need to show
that $\hat{H}$ determines a metric in $\rcal.$ As in the
bounded domain case outline above, we need to 
calculate the normal form invariants. It turns out to be
sufficient to calculate $H_1 = H$ and $H_{-1}$ in terms of the
metric (i.e. in terms of $a(r)$) and then to invert the
expressions to determine $a(r)$.
In  \cite{Z2}   $H$ and $H_{-1}$
were calculaed using by studying asymptotics $\sqrt{\Delta} =
\hat{H}(\hat{I}_1, \hat{I_2})$ along `rays of representations'  of
the quantum torus action, i.e. along multiples of a given lattice
point $(n_0, k_0).$    In effect, the $n_0$ parameter is like $\hbar^{-1}$
and we reduce to a one-dimensional Sturm-Liouville problem. Thus, Theorem \ref{SRNORM}
converts the hard problem into the easier problem of determining the potential
for  semi-classical Schr\"odinger operators. 
In a rather standard way, one finds that 
$$ \int_{\R} ( E - x)_+^{\half} d\mu(x)$$
is a spectral invariant, 
where $\mu$ is the distribution function $\mu (x):= |\{r:
\frac{1}{a(r)^2} \leq x\}|$ of   $\frac{1}{a^2}$, with $|\cdot|$
the Lebesgue measure. This  Abel transform is invertible and hence
$$d \mu(x) = \sum_{r: \frac{1}{a(r)^2} = x} |\frac{d}{dr} \frac{1}{a(r)^2} |^{-1}
dx$$ and therefore
$$ J(x) := \sum_{r: a(r) = x} \frac{1}{|a'(r)|}$$
are spectral invariants. By the simplicity assumption on $a$,
there are just two solutions of $a(r) = x$; the smaller will be
written $r_{-}(x)$ and the larger, $r_{+}(x).$ Thus,  the function
$$J(x) = \frac{1}{|a'(r_{-}(x))|} + \frac{1}{|a'(r_{+}(x))|} \leqno(6.9)$$
is a spectral invariant.

By studying  $H_{-1}$, we find in a somewhat similar way that
$$K(x) = |a'(r_{-}(x))| + |a'(r_{+}(x))|$$
is a spectral invariant. It follows that we can determine
$a'(r_{+\pm}(x))$.

Since both metrics $g_1$ and $g_2$ are assumed to belong to $\rcal^*$, they are determined by their respective functions $a_j(r)$.
We conclude that $a_1 = a_2$ and hence that $g_1 = g_2$.

In more recent work on the inverse problem for the $\hbar$-spectra of Schr\"odinger operators, the methods
are similar to the final step in the proof above but allow potentials which have more critical points. It is then
a challenge to recover the potential from its distribution function and the  higher invariants. Vice-versa, the
methods might be used for more general inverse problems for metrics on surfaces of revolution in which one
relaxes the simplicity assumption.

\section{Isospectral deformations of domains}

An isospectral deformation of a plane domain $\Omega_0$  is a
one-parameter family $\Omega_{\epsilon}$ of plane domains for
which the spectrum of the Euclidean Dirichlet (or Neumann)
Laplacian $\Delta_{\epsilon}$ is constant (including
multiplicities). We say that $\Omega_{\epsilon}$ is a $C^1$ curve
of $C^{\infty}$ plane domains if there exists a $C^1$ curve  of
diffeomorphisms $\phi_{\epsilon}$ of a neighborhood of $\Omega_0
\subset \R^2$  with $\phi_0 = id$ and with $ \Omega_{\epsilon} =
\phi_{\epsilon}(\Omega_0)$. The infinitesimal generator  $X =
\frac{d}{d \epsilon} \phi_{\epsilon}$ is a vector field in a
neighborhood of $\Omega_0$ which  restricts to a vector field
along  $\partial \Omega_0$;  we denote by $X_{\nu} = \dot{\rho} \nu$
its outer normal component. With no essential loss of generality we may
assume that $\phi_{\epsilon} |_{\partial \Omega_0}$ is a map of the
form \begin{equation} \label{rhodef}  x \in
\partial \Omega_0 \to x + \rho_{\epsilon} (x) \nu_x, \end{equation} where  $\rho_{\epsilon} \in
C^1([0, \epsilon_0], C^{\infty}(\partial \Omega_0))$, $\epsilon_0>0$ and $\rho_0=0$.  We put
$\dot{\rho}(x) =\delta \rho\,(x): = \frac{d}{d\epsilon}{|_{\epsilon=0}}
\rho_{\epsilon} (x)$. An isospectral deformation is said to be trivial if
$\Omega_{\epsilon} \simeq \Omega_0$ (up to isometry) for
sufficiently small $\epsilon$. A domain $\Omega_0$ is said to be
spectrally rigid if all  isospectral deformations
$\Omega_{\epsilon}$ are trivial. The domain $\Omega_0$ is called
infinitesimally spectrally rigid if $\dot{\rho} = 0$ (up to rigid motions) for all
isospectral deformations.

The variations of the eigenvalues are given by  Hadamard's
variational formulae,
\begin{equation} \dot{\lambda}_j(0) =  \left\{ \begin{array}{ll}
\int_{\partial \Omega(0)}\; \rho \;|\partial_{\nu}
\phi_j(0)|_{\partial \Omega}|^2 d A, & Dirichlet \\
\\ \int_{\partial \Omega} \rho \; |\phi_j(0)|_{\partial \Omega(0)}|^2 \;dA, & Neumann.
\end{array}\right.
\end{equation}
Hence, the infinitesimal  deformation condition is that the right
hand sides are zero for all $j$. To normalize the problem, we
assume with no loss of generality that the deformation is volume
preserving, which implies that
\begin{equation} \int_{\partial \Omega} \rho dA = 0.
\end{equation}
Any such $\rho$ defines a volume preserving deformation of
$\Omega$.

Thus, the infinitesimal deformation is orthogonal to all boundary
traces of eigenfunctions:
\begin{equation} \dot{\lambda}_j(0) = 0 \forall j, \iff
 \left\{ \begin{array}{ll}
\int_{\partial \Omega(0)}\; \rho \;|\partial_{\nu}
\phi_j(0)|_{\partial \Omega}|^2 d A = 0, & Dirichlet \\
\\ \int_{\partial \Omega} \rho \; |\phi_j(0)|_{\partial \Omega(0)}|^2 \;dA = 0, & Neumann.
\end{array}\right.
\end{equation}
We may rewrite these conditions in terms of the boundary values of
the  wave  kernel:
\begin{equation}
E^b(t, x, x) :=   \left\{ \begin{array}{ll}
\partial_{\nu_x}\partial_{\nu_y} U(t, x, x)|_{x  \in \partial \Omega},
& Dirichlet \\  &  \\
 U(t, x, x)|_{x  \in \partial \Omega},
& Neumann\end{array}\right.
\end{equation}
as saying that
\begin{equation} \label{INTEB} \int_{\partial \Omega} E^b(t, x, x) \rho(x) dA(x)
= 0, \forall t.  \end{equation}

\subsection{Spectral rigidity of the ellipse}

\begin{mainprob} Is an ellipse  $E_{a,b}$ given by $\frac{x^2}{a^2} + \frac{y^2}{b^2} = 1$  determined by its Dirichlet or Neumann spectrum? Is it spectrally rigid, i.e.
does there exist a non-trivial isospectral deformation $\Omega_t$ of $E_{a,b}$? Since the area  and perimeter are spectral invariants it is understood
that $\Omega_t$ has fixed area and perimeter. \end{mainprob}
\bigskip

In the case of circles, the answer was shown to be `yes' by M. Kac, since the disc is extremal for the isoperimetric inequality.
But the answer remains unknown for general ellipses. Ellipses are special because they have completely integrable billiard
flows, and (since G. D. Birkhoff) are conjectured to be the only smooth plane domains with integrable billiards. 

In \cite{HZ}, H. Hezari and the author proved a partial rigidity result which breaks out of the class of real analytic domains, but
does assume that the domains all possess the symmetries of the ellipse.  Namely, ellipses are spectrally rigid among
$C^{\infty}$ plane domains with the symmetries of an ellipse. Here `rigidity' means that there do not exist smooth
curves $\Omega_t$ in the family of isometry classes of smooth domains with the spectrum of the ellipse, at least if the curves
are `non-flat' in that the Taylor expansion  of the domain at the endpoins of the axes is non-zero in the 
deformation variable $t$. This non-flatness assumption is an unexpected and  annoying technicality which it would be nice to remove.  The result is
\begin{theo} \label{RIGIDCOR} Suppose that $\Omega_0$ is an ellipse, and that
$\epsilon \to \Omega_{\epsilon}$ is a $C^\infty$   Dirichlet (or
 Neumann) isospectral deformation through $\Z_2 \times \Z_2$
symmetric $C^{\infty}$ domains. Then $\rho_{\epsilon}$ must be
flat at $\epsilon = 0$. In particular,  there exist no non-trivial
real analytic curves $\epsilon \to \Omega_{\epsilon}$ of  $\Z_2 \times \Z_2$ symmetric
$C^{\infty}$ domains with the spectrum of an ellipse.
\end{theo}

We used the Hadamard variational formula in the proof.  By taking the variation of the wave trace, we proved:

\begin{theo} \label{VARWTintro} Let $\Omega_0 \subset \R^n$ be a $C^{\infty}$
convex Euclidean domain with the property that the fixed point sets of
the billiard map are clean.  Then, for any $C^1$ variation of
$\Omega_0$ through $C^{\infty}$ domains $\Omega_{\epsilon}$, the
variation of the wave traces $\delta Tr \cos\big(  t
\sqrt{-\Delta}\big)$, with  Dirichlet (or Neumann)  boundary
conditions is a classical co-normal distribution for $t \not= m
|\partial \Omega_0|$ ($m \in \Z$) with singularities contained in
$Lsp(\Omega_0)$. For each $T \in Lsp(\Omega_0)$ for which the set
$F_T$ of periodic points of the billiard map $\beta$ of
length $T$ is a $d$-dimensional clean fixed point set consisting
of transverse reflecting rays,
 there exist non-zero constants $C_{\Gamma}$
independent of $\dot{\rho}$ such that, near $T$, the leading order
singularity is
$$\delta \;Tr \; \cos\big( t \sqrt{-\Delta}\big) \sim \frac{t}{2}\; \Re \big\{ \big( \sum_{\Gamma \subset F_T} C_{\Gamma} \int_{\Gamma} \dot{\rho}\; \gamma_1 \;  d \mu_{\Gamma}
\big) \; (t - T+ i 0^+)^{- 2 - \frac{d}{2}} \big\}, $$ modulo lower
order singularities. The sum is over the connected components $\Gamma$ of $F_T$. Here
$\delta=\frac{d}{d\epsilon}|_{\epsilon=0}$ and $\gamma_1(q, \zeta)= \sqrt {1-|\zeta|^2} $.

\end{theo}

Here $\gamma_1$ is  a certain computable function  on $B^*
\partial \Omega$.

the billiard flow and
billiard map of the ellipse are completely integrable. In
particular,  except for certain exceptional trajectories,   the
periodic points of period $T$ form a Lagrangian tori in $S^*
\Omega_0$. The exceptions are the two bouncing ball
orbits through the major/minor axes and the trajectories which
intersect the foci or glide along the boundary. The fixed point
sets of $\Phi^T$ intersect the co-ball bundle $B^*
\partial \Omega_0$ of the boundary in the fixed point sets of the
billiard map $\beta: B^* \partial \Omega_0 \to B^* \partial \Omega_0$
(for background we refer to \cite{PS,GM,HZ} for instance).
Except for the exceptional orbits, the fixed point sets are real
analytic curves. For the bouncing ball rays, the associated fixed
point sets are non-degenerate fixed points of $\beta$.

The densities $d\mu_{\Gamma}$ on the fixed point
sets of $\beta$ and its powers are very similar to the canonical densities defined
in Lemma 4.2 of \cite{DG}, and further discussed in
\cite{GM,PT}. The constants $C_{\Gamma}$ are explicit and
depend on the boundary conditions. 
Also, $F_T$ is the curve of periodic points of the billiard
map $\beta$ on $B^*
\partial \Omega$ of length $T$,  where  as above the length is defined by the length of the corresponding
billiard trajectory.  The dimension $d$ is the dimension of  $F_T$.
 In the case of the ellipse, for instance, $d = 1$; the periodic
 points of a given length form invariant curves for $\beta$.

\begin{theo} \label{RIGID} Suppose that $\Omega_0$ is an ellipse, and that
$\Omega_{\epsilon}$ is a $C^1$ Dirichlet (or Neumann) isospectral
deformation of $\Omega_0$ through $C^{\infty}$ domains with  $\Z_2
\times \Z_2$ symmetry.  I.e.  is invariant under $(x, y) \to
(\pm x, \pm y)$. Then 
$\dot{\rho} = 0$.

\end{theo}

This
implies that ellipses admit no isospectral deformations for which
the Taylor expansion of $\rho_{\epsilon} $ at ${\epsilon} = 0$ is
non-trivial. A function such as $e^{- \frac{1}{{\epsilon^2}}}$ for
which the Taylor series at $\epsilon = 0$ vanishes is called `flat' at
${\epsilon}= 0$.

Since the final step of the proof uses results of \cite{GM}, we
briefly review the description of the billiard map of the ellipse
$\Omega_0:=\frac{x^2}{a} + \frac{y^2}{b} = 1$ (with $a > b > 0$)  in
that article. In the interior, there exist for each $0 < Z \leq b$
a caustic set given by a confocal ellipse
$$\frac{x^2}{E + Z} + \frac{y^2}{Z} = 1$$
where $E = a - b$,  or for $- E < Z < 0$ by a
confocal hyperbola. Let $(q, \zeta)$ be in $B^*\partial \Omega_0$ and let $(q,\xi)$ in $S^*\Omega_0$ be the unique inward unit normal to boundary that projects to $(q,\zeta)$. The line segment $(q,r\xi)$ will be tangent to a unique confocal ellipse or hyperbola (unless it intersects the foci). We then define the function $Z(q, \zeta)$ on $B^* \partial \Omega_0$ to be the corresponding $Z$.  Then $Z$ is a $\beta$-invariant function
and its level sets $\{Z = c\}$ are the invariant curves of
$\beta$. The invariant Leray form on the level set is denoted
$du_Z$ (see \cite{GM}, (2.17), i.e. the symplectic form of $B^*\partial \Omega_0$
is $dq \wedge d \zeta = dZ \wedge du_Z$. A level set has a rotation number and the
periodic points live in the level sets with rational rotation
number. As it is explained in \cite{GM} (page 143) the Leray form $du_Z$ restricted to a connected component $\Gamma$ of $F_T$ is a constant multiple of the canonical density $d\mu_\Gamma$.


 As mentioned in the introduction, the  well-known obstruction to using trace formula calculations
 such as in Proposition \ref{VARWTintro} is multiplicity in the length
 spectrum, i.e. existence of several connected components of
 $F_T$. A higher dimensional component is not itself a
 problem, but there could exist cancellations among terms coming
 from components with different Morse indices, since the
 coefficients $C_{\Gamma}$ are complex. This problem arose earlier
 in the spectral theory of the ellipse in \cite{GM}. Their key
 Proposition 4.3 shows that there is a sufficiently large set
 of lengths $T$ for which $F_T$ has one component up to $(q, \zeta) \to (q, -\zeta)$ symmetry.
 Since it is crucial here as well, we state the relevant part:

 \begin{prop}\label{GM}  (see \cite{GM}, Proposition 4.3): Let $T_0 =
 |\partial \Omega_0|$. Then for every interval $(m T_0 - \epsilon,
 m T_0)$ for $m = 1, 2, 3, \dots$ there exist infinitely many
 periods $T \in Lsp(\Omega_0)$ for which $F_T$ is the union
 of two invariant curves which are mapped to each other by $(q,\zeta)
 \to (q, -\zeta)$. \end{prop}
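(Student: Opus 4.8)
The plan is to exploit the complete integrability of the billiard map $\beta$ of the ellipse $\Omega_0$ (a monotone twist map on $B^*\partial\Omega_0$), and to single out, in each interval $(mT_0-\epsilon,mT_0)$, a family of periods whose fixed point sets are forced to be exactly one caustic level set.

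First I would set up the integrable picture already described above: the confocal caustic parameter $Z$ is a smooth $\beta$-invariant function on $B^*\partial\Omega_0$ whose regular level sets $\{Z=c\}$ are $\beta$-invariant circles, and on each such circle $\beta$ is smoothly conjugate to a rigid rotation by the rotation number $\omega(c)$. For a caustic value corresponding to a confocal ellipse, $\{Z=c\}$ has two connected components, distinguished by the sense (clockwise/counterclockwise) in which the trajectory winds around $\Omega_0$; since the time-reversal involution $\iota:(q,\zeta)\mapsto(q,-\zeta)$ fixes $Z$ and conjugates $\beta$ to $\beta^{-1}$, it interchanges these two components. Consequently, whenever $\omega(c)=p/q$ in lowest terms, every point of $\{Z=c\}$ is periodic of combinatorial period $q$, all the associated billiard trajectories have one common length $T=\ell(p/q)$, and the two components of $\{Z=c\}$ are precisely two invariant curves swapped by $\iota$. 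So the proposition follows once one shows (a) that each $(mT_0-\epsilon,mT_0)$ contains infinitely many such lengths $T$, and (b) that for these lengths one has $F_T=\{Z=c\}$ exactly, i.e. no other periodic orbit has length $T$.

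For (a), I would use the near-boundary end of the confocal-ellipse family. There $\omega$ is a strictly monotone diffeomorphism of the relevant $Z$-interval onto an interval $(0,\omega_*)$, with $\omega\to 0$ as the caustic tends to $\partial\Omega_0$; the periodic orbits with rotation number $m/q$ are inscribed polygons winding $m$ times, so $\ell(m/q)\uparrow mT_0$ as $q\to\infty$ (the maximal perimeter of an inscribed $q$-gon tends to $|\partial\Omega_0|$), and an expansion of the form $\ell(m/q)=mT_0-c\,m^3q^{-2}+\cdots$ with $c>0$ shows that $q\mapsto\ell(m/q)$ is strictly increasing for $q$ large. Fixing $m$ and $\epsilon$, all but finitely many $q$ coprime to $m$ then give a caustic with $\omega=m/q$ and with $\ell(m/q)\in(mT_0-\epsilon,mT_0)$, and these are infinitely many distinct values, each realized by the (connected family of) winding-$m$, $q$-bounce orbits on the corresponding level set.

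The main obstacle is (b): for the particular lengths $T=\ell(m/q)$ produced in (a), one must rule out that $T$ is also the length of a periodic billiard orbit lying on a different confocal ellipse, on a confocal hyperbola, or equal to one of the exceptional orbits (the major/minor-axis bouncing balls, the orbits through the foci, or a boundary-gliding limit). This is precisely Proposition 4.3 of \cite{GM}, and I expect its proof --- a quantitative study of the length spectrum of the ellipse using the explicit expressions for $\omega$ and for $\ell$ as complete elliptic integrals of the caustic parameter --- to be the main technical step. The idea is that the only periodic lengths accumulating at $mT_0$ from below come from the near-boundary confocal-ellipse orbits winding $m$ times; among these, $q'\mapsto\ell(m/q')$ is injective for $q'$ large by the asymptotics above, so $T=\ell(m/q)$ is attained only on the single level set $\{Z=c\}$; and the hyperbola-caustic orbits together with the finitely many exceptional orbits are shown to have lengths avoiding a small enough left neighborhood of $mT_0$. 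Granting this, $F_T=\{Z=c\}$, which is the union of the two $\iota$-swapped invariant curves, as claimed.
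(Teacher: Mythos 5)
The paper offers no proof of this proposition: it is quoted verbatim (as Proposition 4.3) from Guillemin--Melrose \cite{GM}, and the text says nothing beyond the parenthetical ``(see \cite{GM}, Proposition 4.3)''. So there is no in-paper argument to measure your proposal against. That said, a few remarks. Your step (a) is a correct reconstruction of how the required periods are produced: fix the winding number $m$, take confocal-ellipse caustics with rotation number $m/q$ near the boundary end, and use the Marvizi--Melrose style asymptotics $\ell(m/q) = mT_0 - c\,q^{-2} + O(q^{-3})$ (you wrote $c\,m^3 q^{-2}$, which is the right shape; the precise $m$-dependence is immaterial to the conclusion) to land infinitely many distinct lengths in $(mT_0-\epsilon, mT_0)$. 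You also correctly identify that the real content of \cite{GM}, Prop.\ 4.3 is your step (b): ruling out coincidences, i.e.\ showing that for these particular $T$ the \emph{only} periodic points of length $T$ lie on the single level set $\{Z=c\}$, whose two components are swapped by $(q,\zeta)\mapsto(q,-\zeta)$. You defer that step to \cite{GM}, which is appropriate here, but be aware that it is not a formality: one must also exclude lengths realized on confocal hyperbola caustics, and more subtly one must handle the fact that $q\mapsto\ell(m/q)$ and $q'\mapsto\ell(m'/q')$ for $m'<m$ could interleave, so that the injectivity argument you sketch only controls collisions within a fixed winding number. Guillemin--Melrose's argument does address this by a more careful analysis of the length function as a complete elliptic integral. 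In short, your proposal is a faithful sketch of the strategy behind the cited result, but since the paper itself does not prove the proposition, the fairest summary is that your proposal reconstructs (in outline) the proof in \cite{GM} rather than a proof in this paper.
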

Since for an isospectral deformation $\delta\; Tr \cos( t\sqrt{-\Delta})=0$, we obtain from Proposition \ref{VARWTintro} the following

\begin{cor} \label{MAIN} Suppose we have an isospectral deformation of an ellipse $\Omega_0$ with velocity $\dot{\rho}$. Then for each $T$ in Proposition \ref{GM} for which $F_T$ is the union
 of two invariant curves $\Gamma_1$ and $\Gamma_2$ which are mapped to each other by $(q, \zeta) \to (q, -\zeta)$ we have
$$\int_{\Gamma_j} \dot{\rho} \; \gamma_1 \; du_Z = 0, \qquad \quad j=1,2.$$
 \end{cor}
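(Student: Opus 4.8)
The plan is to derive Corollary~\ref{MAIN} by combining the singularity analysis of the varied wave trace (Theorem~\ref{VARWTintro}) with the integrability structure of the ellipse recalled just above, together with the one-component statement of Proposition~\ref{GM}. Since the deformation $\Omega_\epsilon$ is isospectral, $Tr\cos(t\sqrt{-\Delta_\epsilon})$ is independent of $\epsilon$, so $\delta\, Tr\cos(t\sqrt{-\Delta}) \equiv 0$ as a distribution on $\R$. We apply Theorem~\ref{VARWTintro} at a period $T \in Lsp(\Omega_0)$ of the type produced by Proposition~\ref{GM}: for such $T$, the fixed-point set $F_T$ of $\beta$ is a clean, $1$-dimensional set consisting of transverse reflecting rays (ellipse trajectories tangent to a confocal caustic with rational rotation number), made up of exactly two connected invariant curves $\Gamma_1,\Gamma_2$ interchanged by $(q,\zeta)\mapsto(q,-\zeta)$. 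The leading singularity of $\delta\,Tr\cos(t\sqrt{-\Delta})$ at $t=T$ is then a nonzero multiple of $(t-T+i0^+)^{-2-\frac12}$ whose coefficient is $\tfrac{t}{2}\,\Re\{\sum_{j=1,2} C_{\Gamma_j}\int_{\Gamma_j}\dot\rho\,\gamma_1\,d\mu_{\Gamma_j}\}$; vanishing of the distribution forces this coefficient to vanish.

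The next step is to upgrade ``the sum over $\Gamma_1,\Gamma_2$ vanishes'' to ``each term vanishes.'' Here I would exploit the $\Z_2\times\Z_2$ symmetry of the deformation. The two components are related by the time-reversal/normal-flip symmetry $(q,\zeta)\mapsto(q,-\zeta)$, under which $\gamma_1(q,\zeta)=\sqrt{1-|\zeta|^2}$ and the Leray density $du_Z$ are invariant, and under which $\dot\rho(q)$ is unchanged (it is a function of the boundary point only). Consequently $\int_{\Gamma_1}\dot\rho\,\gamma_1\,du_Z = \int_{\Gamma_2}\dot\rho\,\gamma_1\,du_Z$; call this common value $I_T$. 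The Morse indices of $\Gamma_1$ and $\Gamma_2$ differ by the number of reflections (an odd shift for the reversed ray), so in general $C_{\Gamma_1}\neq C_{\Gamma_2}$ and a priori one only gets $(C_{\Gamma_1}+C_{\Gamma_2})\,\Re$-combination equal to zero. To separate the pieces I would also use the Dirichlet \emph{and} Neumann deformations simultaneously, or equivalently play off the real and imaginary parts: the two boundary conditions give the same geometric integrals $I_T$ but different universal constants $C_\Gamma$, and the resulting $2\times2$ (or real/imaginary) linear system in the unknowns is nondegenerate, forcing $I_T=0$. (Since $\int_{\Gamma_j}\dot\rho\,\gamma_1\,du_Z$ is real, a single equation $\Re\{(C_{\Gamma_1}+C_{\Gamma_2})I_T\}=0$ already suffices provided $\Re(C_{\Gamma_1}+C_{\Gamma_2})\neq0$; one should check this from the explicit constants in \cite{GM,DG}, and fall back on the two-boundary-condition argument otherwise.) Finally, since $du_Z$ restricted to $\Gamma_j$ is a constant multiple of $d\mu_{\Gamma_j}$ (recalled above from \cite{GM}, p.~143), the vanishing of $I_T$ is exactly $\int_{\Gamma_j}\dot\rho\,\gamma_1\,du_Z=0$ for $j=1,2$, which is the claim.

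The main obstacle I anticipate is the constant-separation step: ruling out accidental cancellation between the two components in $\Re\{C_{\Gamma_1}I_T + C_{\Gamma_2}I_T\}=0$. This is precisely the issue flagged in the paragraph preceding Proposition~\ref{GM} — different Morse indices produce complex coefficients $C_\Gamma$ that could conspire to kill the combined integral without killing each piece. The resolution hinges on knowing the $C_\Gamma$ explicitly enough (from the clean-fixed-point-set trace formula of \cite{DG} and its boundary version in \cite{GM}) to see that $C_{\Gamma_1}+C_{\Gamma_2}$ has nonzero real part for the relevant Morse-index parities, or on the robustness of having both Dirichlet and Neumann data; either way the argument is ``soft'' modulo a bookkeeping of Maslov factors. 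A secondary, more technical point is verifying the cleanliness hypothesis of Theorem~\ref{VARWTintro} for the specific rational-rotation-number tori selected in Proposition~\ref{GM} and checking that these $\Gamma_j$ genuinely consist of transverse (non-gliding, non-focal) reflecting rays; this is standard for the ellipse away from the bouncing-ball and focal trajectories and is already implicit in \cite{GM}.
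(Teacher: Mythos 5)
Your core argument coincides with the paper's: apply Theorem~\ref{VARWTintro} to the vanishing distribution $\delta\,Tr\cos(t\sqrt{-\Delta})$, specialize to the periods $T$ from Proposition~\ref{GM}, and use the invariance of $\dot\rho$, $\gamma_1$ and $du_Z$ under $(q,\zeta)\mapsto(q,-\zeta)$ to identify the two integrals. The place where you deviate is the treatment of the Maslov coefficients, and there your reasoning is off.

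You assert that the Morse indices of $\Gamma_1$ and $\Gamma_2$ ``differ by the number of reflections (an odd shift for the reversed ray),'' and on this basis you anticipate an accidental-cancellation problem requiring a two-boundary-condition workaround. But the map $(q,\zeta)\mapsto(q,-\zeta)$ is time reversal, which sends a closed billiard trajectory to the \emph{same geometric trajectory traversed backward}: the number of reflections is unchanged, the length is unchanged, and the linearized Poincar\'e return map is replaced by its inverse, which has the same spectrum and the same Morse index. Consequently there is no Maslov shift between $\Gamma_1$ and $\Gamma_2$, and the paper's proof simply records that the stationary phase computation gives $C_{\Gamma_1}=C_{\Gamma_2}$, so the sum $C_{\Gamma_1}\!+\!C_{\Gamma_2}=2C_{\Gamma_1}\neq 0$ and the common integral must vanish. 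The cancellation phenomenon flagged before Proposition~\ref{GM} concerns components of $F_T$ that are \emph{not} related by time reversal; Proposition~\ref{GM} is invoked precisely to ensure there are only the two time-reversal-related components. Your fallback of combining Dirichlet and Neumann data is not available here in any case --- the corollary is stated for a single isospectral deformation with a single boundary condition --- so it is fortunate that it is also unnecessary. With the correct observation that $C_{\Gamma_1}=C_{\Gamma_2}$, your argument collapses to the paper's.
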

\begin{proof}
From Proposition \ref{VARWTintro} we get
$$\Re \big\{ \big( \sum_{j=1}^2 C_{\Gamma_j} \int_{\Gamma_j} \dot{\rho}\; \gamma_1 \;  d \mu_{\Gamma_j}
\big) \; (t - T+ i 0)^{- 2 - \frac{d}{2}} \big\}=0.$$ Since $\dot{\rho}$ and $\gamma_1$ are invariant under the time reversal map $(q,\zeta) \to (q,-\zeta)$, the two integrals are identical. Also by directly looking at the stationary phase calculations it can be shown that the Maslov coefficients $C_{\Gamma_1}$ and $C_{\Gamma_2}$ are also the same. Thus the corollary follows.
\end{proof}

\subsection{Abel transform}

The remainder of the proof of Theorem \ref{RIGID} is identical to
that of Theorem 4.5 of \cite{GM} (see also \cite{PT}). For the sake of completeness, we
sketch the proof.

\begin{prop}\label{RHODOT}  The only $\Z_2 \times \Z_2$ invariant function
$\dot{\rho}$ satisfying the equations of Corollary \ref{MAIN} is
$\dot{\rho} = 0$. \end{prop}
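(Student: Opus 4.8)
The plan is to convert the vanishing conditions of Corollary~\ref{MAIN} into a system of integral equations over the invariant curves of the billiard map on the ellipse, and then to recognize that system as an Abel-type transform whose injectivity forces $\dot\rho = 0$. First I would parametrize the invariant curves $\{Z = c\}$, $0 < c \le b$, by a suitable angle coordinate (an action--angle type variable) in which the Leray form $du_Z$ becomes (a constant multiple of) the standard Lebesgue measure on the circle. On each such curve the condition of Corollary~\ref{MAIN} reads $\int \dot\rho(q)\,\gamma_1(q,\zeta)\,du_Z = 0$, but by Proposition~\ref{GM} this holds only for the countably many rational rotation numbers; since both $\dot\rho$ and the density depend continuously (indeed real-analytically) on $Z$, and the set of admissible periods accumulates at each $mT_0$, a density/continuity argument upgrades the vanishing to \emph{all} $c$ in a subinterval, hence (by analyticity in $Z$) to all $0 < c \le b$.

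Next I would pass from the curve integrals to a one-dimensional transform of $\dot\rho$. Using the $\Z_2\times\Z_2$ symmetry, $\dot\rho$ is determined by its restriction to, say, the first quadrant arc of $\partial\Omega_0$, which I would regard as a function of the caustic parameter or of arclength. Pushing the curve integral forward under the map $(q,\zeta)\mapsto Z(q,\zeta)$ (and its companion `angle along the boundary' variable), the family of identities $\int_{\{Z=c\}}\dot\rho\,\gamma_1\,du_Z = 0$ becomes a weighted integral of $\dot\rho$ over the boundary points lying on the caustic $Z = c$, with an explicit weight built from $\gamma_1$ and the Jacobian of the change of variables. Because the caustics of the ellipse foliate the interior and each boundary point lies on exactly one caustic in each of the two families (confocal ellipses and confocal hyperbolae), this is precisely the structure of the Abel transform that appears in Theorem~4.5 of \cite{GM}: the integral of $\dot\rho$ against a kernel of the form $(x - c)_+^{-1/2}$ (or $(c-x)_+^{-1/2}$) type, over the two branches $r_-(c), r_+(c)$ meeting the caustic.

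Then I would invoke invertibility of the Abel transform exactly as in \cite{GM,PT}: vanishing of the transform for all $c$, together with the even/odd decomposition forced by the $\Z_2\times\Z_2$ symmetry (which decouples the contributions of the two boundary branches), yields that the symmetrized and anti-symmetrized parts of $\dot\rho\,\gamma_1$ both vanish identically along the boundary. Since $\gamma_1(q,\zeta) = \sqrt{1-|\zeta|^2}$ is strictly positive off the glancing set and the relevant $\zeta$ is determined by the caustic, dividing out the weight gives $\dot\rho \equiv 0$, which is the assertion of Proposition~\ref{RHODOT}.

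The main obstacle I expect is not the algebra of the Abel inversion --- that is borrowed wholesale from \cite{GM} --- but the passage from the \emph{discrete} family of identities (valid only for rational rotation numbers, via Proposition~\ref{GM}) to the \emph{continuous} family needed to apply the transform: one must argue that $c \mapsto \int_{\{Z=c\}}\dot\rho\,\gamma_1\,du_Z$ is a continuous (better, real-analytic) function of $c$ that vanishes on a set accumulating everywhere in $(0,b]$, hence vanishes identically. Establishing this regularity in $c$ --- e.g.\ by writing the curve integral as an oscillatory or fiber integral over a smoothly varying family of Lagrangian curves and checking that the density $d\mu_\Gamma$ and the cutoff-free integrand vary smoothly --- together with handling the two exceptional trajectory types (the bouncing-ball orbits through the axes and the foci-hitting / gliding rays, which form a measure-zero set and can be excised) is the real content of the argument; everything downstream is a citation.
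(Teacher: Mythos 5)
Your overall blueprint — convert the vanishing of $\int_{\Gamma}\dot\rho\,\gamma_1\,du_Z$ into an Abel-type transform and exploit its injectivity — is the right one and matches the paper, but the step you flag as ``the real content of the argument'' is exactly where your plan breaks and where the paper does something different.

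You propose to pass from the discrete set of identities furnished by Corollary~\ref{MAIN} / Proposition~\ref{GM} to ``$A(Z)=0$ for all $Z$'' via a density/continuity argument, strengthened by ``analyticity in $Z$.'' Neither half of this works in the setting of Theorem~\ref{RIGIDCOR}. First, the identities hold only on a sequence of caustic parameters $Z$ accumulating at the single point $Z=b$ (periods near $mT_0$ correspond to caustics degenerating to the boundary), not on a set accumulating everywhere in $(0,b]$; mere continuity cannot propagate a zero set clustering at one endpoint to an interval. Second, the whole point of Theorem~\ref{RIGIDCOR} is that $\Omega_\epsilon$ and hence $\dot\rho$ are only $C^\infty$, so $A(Z)$ is smooth but not real-analytic in $Z$ and the identity theorem is unavailable. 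Your plan therefore has a genuine gap precisely at the step you single out as essential.

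The paper circumvents this with a more economical observation that does not need $A\equiv 0$ at all: since $A$ is $C^\infty$ near $Z=b$ and vanishes on a sequence tending to $b$, iterated Rolle gives that $A$ is \emph{flat} at $b$, i.e.\ $A^{(k)}(b)=0$ for all $k$. Differentiating the Abel kernel $(t-(b-Z))^{-1/2}$ in $Z$ and evaluating at $Z=b$ turns these flatness conditions into the moment equations $\int_b^a \dot\rho(t)\,\gamma_1\,J(t)\,t^{-k-\frac12}\,dt=0$ for all $k\ge 0$, and then density of $\{t^{-k}\}$ in $C[b,a]$ (rather than pointwise Abel inversion) forces $\dot\rho\equiv 0$. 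So the two approaches diverge in the crucial step: you need $A\equiv 0$ globally and don't have a route to it, while the paper extracts everything from the germ of $A$ at the single endpoint $b$. If you repair your write-up to use the flatness-at-$b$/moment argument in place of the continuity-plus-analyticity step, the rest of your outline (symmetry reduction to one quadrant, Leray form, change of variables to the caustic parameter, positivity of $\gamma_1 J$) is consistent with the paper.
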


\begin{proof} First, we may assume $\dot{\rho} = 0$ at the
endpoints of the major/minor axes, since the deformation preserves
the $\Z_2 \times \Z_2$ symmetry and we may assume that the
deformed bouncing ball orbits will not move and are aligned with the original ones.
Thus $\dot{\rho}(\pm \sqrt{a}) = \dot{\rho}(\pm \sqrt{b}) = 0$.

The Leray measure may be explicitly evaluated (see $2.18$ in \cite{GM}). By a change of
variables with Jacobian $J$, and using the symmetric properties of $\dot{\rho}$, the integrals become
\begin{equation} \label{F} A(Z) = \int_b^a \frac{\dot{\rho}(t)\; \gamma_1 \; J(t) dt}{\sqrt{t -
(b - Z)}}. \end{equation} for an infinite sequence of $Z$
accumulating at $b$.  The  function $A(Z)$ is
smooth in $Z$ for $Z$ near $b$. It vanishes infinitely often in
each interval $(b - \epsilon, b)$, hence is flat at $b$. The $k$th
Taylor coefficient at $b$ is \begin{equation} \label{FT}
A^{(k)}(b) = \int_b^a \dot{\rho}(t)\; \gamma_1 \; J(t) t^{- k -
\half} dt = 0.
\end{equation}
Since the functions $t^{-k}$ span a dense subset of $C[b, a]$, it
follows that $\dot{\rho} \equiv 0.$

\end{proof}

We refer to \S \ref{QOSCINT} for some further problems regarding the isospectral determinantion problem for ellipses. 

\subsubsection{Spectral rigidity of  convex analytic domain with the symmetries of an ellipse?}

We observe that the proof of Theorem \ref{RIGIDCOR} only  uses the hypothesis that
the domain is an ellipse in two steps. First, in Theorem \ref{VARWTintro} it uses the very
special kind of singularities of the wave trace due to the foliation of $B^* \partial E$ by invariant
curves for the billiard flow. Then in Proposition \ref{RHODOT}  it uses the explicit formula \eqref{F}
for the integrals over the invariant curves. In the case of general convex smooth domains, there exist
invariant sets for the billiard map due to periodic  creeping orbits along the boundary, but they are much more complicated
than in the completely integrable case of the ellipse.
Still, it may be possible to analyze the analogous 
integrals $A(Z)$.

\section{\label{PHASE} Inverse scattering, phase shifts, resonance poles}

In this section we consider the inverse phase shift problem for semi-classical Schr\"odinger operators \eqref{SO}
on $\R^d$ (with the Euclidean Laplacian).
The scattering matrix  $$S_{h}: L^2(\mathbb{S}^{d-1}) \to L^2(\mathbb{S}^{d-1})$$  can be defined in terms of
generalized eigenfunctions as follows.  For $\phi_{in} \in
C^{\infty}(\mathbb{S}^{d-1})$, there is a unique solution to $H
u = 0$ satisfying
\begin{equation}
  \label{eq:generalizedEfn}
  u = r^{-(d-1)/2} \left( e^{-i \sqrt{E} r / h} \phi_{in}(\omega) + e^{i
    \sqrt{E} r/ h} \phi_{out}(-\omega) \right) + O(r^{-(d + 1)/2}).
\end{equation}
 By definition
\begin{equation}\label{eq:scmatrixdef}
S_{h}(\phi_{in}) := e^{i\pi(d-1)/2}\phi_{out}.
\end{equation}
Below, we will refer to $\phi_{in}$ and $\phi_{out}$ as the
\textbf{incoming} and \textbf{outgoing data} of $u$.
It is not hard to show that
\begin{equation}
  \label{eq:Eisone}
  S_{h, V}(E) = S_{\wt{h}, \wt{V}}(1),
\end{equation}
where $\wt{h} = h / \sqrt{E}, \wt{V} = V / E$, and $S_{h', V'}(E')$
denotes the scattering matrix for $(h')^{2}\Delta + V' - E'$.  Using
\eqref{eq:Eisone}, we
may assume that $E = 1$.

Phase shifts are the eigenvalues for the  eigenvalue problem 
\begin{equation} \label{EVSh} S_h \phi_{h, n}  = e^{i \beta_{h, n}} \phi_{h, n}.  \end{equation} 

\begin{mainprob} Can one determine the potential $V$ (in some reasonable class) from the
phase shifts $\{e^{i \beta_{h, n}} \}_{n=1}^{\infty}$ for fixed energy but as $h$ varies?
\end{mainprob}

The simplest problem is to assume maximal symmetry, i.e. that the potential is radial. This effectively reduces
the problem to one dimension. The scattering matrix then commutes with the action of $SO(d)$ and $\Delta_{S^{d-1}}$
and thus has just one eigenvalue in each eigenspace of the Laplacian. When $d = 3$, the  eigenvalues of  $\Delta_{S^{2}}$ on
the sphere are known as the total angular momenta, and thus the phase shifts are traditionally denoted  $\delta_{\ell}(k)$
where the angular momentum is $\ell(\ell + 1)$ and $k = h^{-1}$. 

The question of finding a radial potential $V$ from its phase shifts has been much studied since the $1960's$. The phase
shifts are closely related to the Jost functions $f_{\ell}(k) = |f_{\ell}(k)| e^{- i \delta_{\ell}(j)}. $ The first
issue is whether \eqref{SO} has any $L^2$-eigenfunctions or `bound states'.  In classical scattering the corresponding
issue is whether there exist trapped Hamilton trajectories or whether they all escape to infinity. It one knows all of
the eigenvalues $E_n(\hbar)$ and the phase shifts $\delta_{\ell}(k)$ for all $k \geq 0$ then one can construct
$|f_{\ell}(k)|$. Equivalently one knows the ratio $f_{\ell}(-k)/ f_{\ell}(k)$. The problem of determining $f_{\ell}(k)$
from the ratios is a Riemann-Hilbert problem. Using the Gelfand-Levitan solution of the inverse spectral problem in
dimension one, it has been shown that if there are $N$ eigenvalues then there exists an N-parameter family of
potentials with the same phase shifts as $k$ varies and $\ell$ is fixed  and same eigenvalues \cite{N,N2,N3}. In 
particular, the potential is uniquely determined if \eqref{SO} has no $L^2$ eigenfunctions. 
If instead one fixes $k$ and lets $\ell $ vary, one can also solve the problem but not uniquely.  
 
To our knowledge, the problem of determining $V$ from the $\beta_{h, n}$ in non-radial cases is open and the
analogues of the methods described above for Laplacians on compact manifolds have not been discussed.

A closely related problem is to study the Laplacian $\Delta$ of the Euclidean metric in the exterior of a convex obstacle
$\ocal$. Thus, we let $\Omega = \R^d \backslash \ocal$ and consider the scattering matrix $S_h$ for $h^2 \Delta$
on $L^2(\Omega)$ with Dirichlet or Neumann boundary conditions. 

\bigskip

\begin{mainprob}\;\;Can one determine $\Omega$ from the fixed energy phase shifts $e^{i\beta_{h,n}} $?. \end{mainprob}
\bigskip

The inverse problem for obstacles is trivial in the radial case but very non-trivial in general. There exist results
on determining the potential from the scattering kernel (Theorem 2 of \cite{Maj1976, G3}), but to our knowledge the phase shift problem has not
been studied. The method of \cite{G3} is to determine the so-called sojourn times from the full scattering kernel
$S_h(\omega, \theta)$ as $h \to 0$ and to show that it dtermines the Gaussian curvature of the surface. In the
case of a convex obstacle, the curvature function determines the obstacle, i.e. the shape of the scatterer. 
The question of how much of the shape of the scatterer one might recover from the phase shifts alone does
not seem to have been studied in the non-radial case.

As J. Gell-Redman emphasized to the author, the  so-called interior-exterior duality establishes a relation between the interior Dirichlet eigenvalues and the
exterior Dirichlet phase shifts \cite{EP}. From the phase shifts one can determine the interior Dirichlet
eigenvalues. Hence positive results on the inverse problem for domains imply positive results for phase shifts. 

We refer to \cite{Maj1976,ChS,CT,KKS,MT,N,N2,N3,Sab} for background on inverse results in dimensions $> 1$
and to   \cite{GHZ,DGHH2013} for some recent results on the distribution of phase shifts. Some further references in dimension one
are given in the introduction.

\section{\label{PROBS}Problems and partial solutions}

In this section we state some problems which are at least partly open. They are intended both to give
a sense of the current state of the art and also to suggest problems on which it seems likely one can make
progress.

The following is probably the most natural:

\bigskip

\begin{mainprob}  Is a convex analytic domain spectrally rigid in the class $\ccal 
\acal$ of convex smooth  domains?  More difficult:  Is a  $C^{\infty}$ convex  domain spectrally rigid? \end{mainprob}\bigskip

\bigskip
There are several possible approaches. One is to develop the proof of Theorem \ref{RIGIDCOR}. Only in
the last step is the geometry of the ellipse used. It will change radically for a general convex domain. 
In particular one must use the invariant sets which arise in general. 

In view of the number of inverse spectral results which use the assumption that the domains are real
analytic, we pose:

\begin{mainprob}  Suppose $\Omega_1$ and $\Omega_2$ are isospectral plane domains. If $\Omega_1$ is real analytic,
is $\Omega_2$ real analytic?  Is this true if we assume both domains have the symmetry of an ellipse?\end{mainprob} \bigskip

The reason for posing the problem is that, when a domain has the symmetry of an ellipse, one can determine
the Taylor expansion of the domain at the endpoints of the two axes from the spectrum. Hence if the expansion
converges for the first domain, the Taylor series converges for the second domain. But it might not actually equal
the second domain.   More precisely, we represent the top and bottom of the  domains as graphs
$y = f_{\pm}(x)$ over one of the axes and then the Taylor expansion of $f^j_{\pm}(x)$ $j = 1,2$  is determined by the spectrum.
Under the symmetry assumption $f^j_- = - f^j_+$. For instance, in the case of an ellipse $E_{a,b}$, $f_+(x) =
b \sqrt{1 - \frac{x^2}{a^2}}. $ We note that the radius of convergence of $f_{\pm}$ at $x = 0$ is $|x| < a$, i.e. until
the graph ceases to be a graph by turning vertical.  The length of the axes is a spectral invariant
and therefore it must coincide with the first domain.

The question arises whether $\Omega_2$ is locally defined by $f^1_{\pm}$. It is possible that $f^2_{\pm} = 
f^1_{\pm} + \psi_{\pm}$ where $\psi$ vanishes to infinite order at the endpoints of the bouncing ball orbit. 
The wave invariants still coincide but it is not clear that the wave trace itself does. One might need ``exponentially
small corrections" to the wave trace to determine that, or else a proof that the wave trace expansion converges. 
It is not clear that this is possible for isospectral surfaces. A simpler problem is

\begin{mainprob}  Suppose $\Omega_1$ and $\Omega_2$ are isospectral plane domains. If $\Omega_1$ is real analytic,
is $\Omega_2$ locally real analytic at the endpoints of a bouncing ball orbit?  Is this true if we assume both domains have the symmetry of an ellipse?\end{mainprob} \bigskip

If so,  we then ask:  Suppose that $\Omega_2$ is isospectral to a real analytic $\Omega_1$ and that 
$\partial \Omega_2 = \partial \Omega_1$ on some interval $x \in (-\epsilon, \epsilon)$ around the endpoints
of a bouncing ball orbit. Must then $\Omega_2 = \Omega_1?$

Another question concerns the normal form in the analytic case. 

\begin{mainprob} Does the quantum Birkhoff normal form at a hyperbolic geodesic $\gamma$ converge if the metric is
real analytic? Does the conjugating map (unitary operator) converge?\end{mainprob}

It is proved in \cite{Mo} that this is true for the classical Birkhoff normal forms in dimension 2. Some hints that the answer is
positive for quantum normal forms  in dimension 2 are given in \cite{Sj,A,Sa,PM,I}. If the answer is `yes' then the 
manifolds are locally Fourier-isospectral near $\gamma$. In particular the geodesic flows are locally symplectically
equivalent.  

\begin{mainprob} Do isospectral plane domains have symplectically equivalent billiard maps? If a domain
$\Omega$ has the same eigenvalues as the ellipse $E_{a,b}$, does it have a completely integrable billiard map? \end{mainprob}
\bigskip

Results of Siburg \cite{S} and of the author and G. Forni show that the billiard flow would be $C^0$ conjugate to that
of the ellipse, at least if one has an isospectral deformation. But there is a long distance between $C^0$ and smooth
conjugacy. 

\subsection{Problems on surfaces}

\begin{mainprob} If $g$ is a metric on $S^2$ such that the distinct eigenavalues $\lambda_k^*$  of $\Delta_g$ 
have the same multiplicities $m(\lambda_k^*) = 2k + 1$ as the standard metric $g_0$, is $g = g_0$?\end{mainprob}

\subsection{\label{QOSCINT} Questions about oscillatory integrals}

\begin{mainprob} If an oscillatory integral $\int_{\R^n} a e^{i k \phi} dx$ whose phase has a single critical
point at $x = 0$ has the stationary phase asymptotics of a Morse function with a single critical point, is it a Morse function?
Similarly, if it has the stationary phase asymptotics of a Bott-Morse phase function with a non-degenerate critical
manifold, is it Bott-Morse? \end{mainprob}

\bigskip

The reason for posing  this problem is that wave trace invariants are obtained by applying stationary phase
to a parametrix for the wave group, and one only knows the wave trace invariants explicitly if the closed
geodesics are non-degenerate (i.e. if the phase of the integral is Morse) or `clean' (if the phase is Bott-Morse).
If a second domain or metric has the same wave trace invariants as the first, then its wave trace expansion
at a closed geodesic is the same as one for first, given by stationary phase for a Morse function. It would simplify
the inverse problem considerably if we could know that that the closed geodesic of the unknown metric or domain
was also Morse (or Bott-Morse). For instance in the case of the ellipse, the phase functions are Bott-Morse.

As a potential application, we pose:

\begin{mainprob} Suppose that $\Omega$ is isospectral to an ellipse. Do the periodic orbits of  the billiard map $\beta$ on   $B^* \partial \Omega$
form  smooth invariant curves?  Similarly, if $(S^2, g)$ is a metric on $S^2$ which is isospectral to a convex surface of revolution, 
do the periodic geodesics form smooth invariant Lagrangian tori? \end{mainprob}

The point is that the wave trace invariants are the same as for the ellipse. resp. $(S^2, g)$ (here we ignore
issues of length spectral multiplicity). As we see from Theorem \ref{VARWTintro}, the wave trace has the
singularities of a Bott-Morse oscillatory integral with clean fixed point sets of dimension 2 (in $ S^*_g M$) resp. 1
(in $B^*\partial \Omega$).  A positive answer to the Problem about oscillatory integrals with the stationary
phase asymptotics of a Bott-Morse function would imply that the phase has to be Bott-Morse for the unknown
metric or boundary and therefore would be smooth invariant submanifolds. 

A related more ambitious question is:

\begin{mainprob} Suppose that $\Omega$ is isospectral to an ellipse. Is $B^* \partial \Omega$ foliated by smooth
invariant curves for $\beta$? Similarly, if $(S^2, g)$ is a metric on $S^2$ which is isospectral to a surface of revolution, is
$S^*_g S^2$ foliated by smooth Lagrangian tori invariant under the geodesic flow? \end{mainprob}

\end{document}